\documentclass{article}
\usepackage{graphicx} 
\usepackage{amsthm}
\usepackage{amsfonts, amsmath, amssymb, amsthm}
\usepackage{cleveref}
\usepackage[utf8x]{inputenc}
\usepackage{url}

\def\PP{\mathbb{P}}

\def\Tc{\mathcal{T}}
\def\Sc{\mathcal{S}}

\def\J{\mathcal{J}}
\def\I{\mathcal{I}}

\def\Rc{\mathcal{R}}

\newcommand{\uh}{\upharpoonright}

\newcommand{\qvdash}{\operatorname{{?}{\vdash}}}
\newcommand{\nqvdash}{\operatorname{{?}{\nvdash}}}

\newcommand{\BSig}{\mathsf{B}\Sigma^0}

\newcommand{\RCA}[0]{\mathsf{RCA}}

\newcommand{\ACA}[0]{\mathsf{ACA}}

\newcommand{\RT}[0]{\mathsf{RT}}

\newcommand{\HT}[0]{\mathsf{HT}}
\newcommand{\FUT}[0]{\mathsf{FUT}}

\newcommand{\NN}[0]{\mathbb{N}}

\newcommand{\SG}{\operatorname{SG}} 
\newcommand{\VSG}{\operatorname{VSG}} 
\newcommand{\card}{\operatorname{card}}
\newcommand{\dom}{\operatorname{dom}}
\newcommand{\ran}{\operatorname{ran}}

\newcommand{\Pfin}{\mathcal{P}_{\mathtt{fin}}}
\newcommand{\us}{\operatorname{us}}
\newcommand{\FS}[0]{\mathsf{FS}} 
\newcommand{\FU}[0]{\mathsf{FU}} 

\def\qt#1{``#1''}%
\def\h#1{\hat{#1}}

\title{Hindman's theorem does not code $\emptyset^{(\omega)}$\\ in one application}
\date{\today}

\newtheorem{theorem}{Theorem}
\numberwithin{theorem}{section}
\newtheorem{maintheorem}[theorem]{Main Theorem}
\newtheorem{lemma}[theorem]{Lemma}

\newtheorem{proposition}[theorem]{Proposition}
\newtheorem{remark}[theorem]{Remark}
\newtheorem{definition}[theorem]{Definition}

\newtheorem{claim}{Claim}

\AtBeginEnvironment{theorem}{\setcounter{claim}{0}}
\AtBeginEnvironment{maintheorem}{\setcounter{claim}{0}}
\AtBeginEnvironment{lemma}{\setcounter{claim}{0}}
\AtBeginEnvironment{proposition}{\setcounter{claim}{0}}
\AtBeginEnvironment{corollary}{\setcounter{claim}{0}}
\AtBeginEnvironment{question}{\setcounter{claim}{0}}
\AtBeginEnvironment{example}{\setcounter{claim}{0}}

\makeatletter
\newtheorem*{rep@theorem}{\rep@title}
\newcommand{\newreptheorem}[2]{%
  \newenvironment{rep#1}[2][]{%
    \def\optarg{##1}%
    \ifx\optarg\@empty
      \def\rep@title{#2 \ref{##2}}%
    \else
      \def\rep@title{#2 \ref{##2} (##1)}%
    \fi
    \begin{rep@theorem}}%
    {\end{rep@theorem}}}
\makeatother

\newreptheorem{theorem}{Theorem}
\newreptheorem{maintheorem}{Main Theorem}

\usepackage{authblk}
\DeclareSymbolFont{bbold}{U}{bbold}{m}{n}
\DeclareMathSymbol{\bbomega}{\mathord}{bbold}{"7F}

\usepackage{xcolor}

\author{Lu Liu and Ludovic Patey}

\begin{document}

\maketitle

\begin{abstract}
We prove that for every non-arithmetic set~$C$ and every arithmetic finite coloring of~$\NN$, there is an infinite set $H \subseteq \NN$ whose non-empty finite sums of distinct elements is monochromatic, and $C$ is not $H$-computable. We also study restrictions of Hindman's theorem to simple colorings.
\end{abstract}

\section{Introduction}

In 1974, Hindman~\cite{hindmanFiniteSumsSequences1974} proved the following theorem, which was conjectured by Graham and Rothschild:

\begin{theorem}[Hindman]
For every~$\ell \geq 1$ and every coloring $f : \NN \to \ell$, there is an infinite set $H \subseteq \NN$ such that $\FS(H)$ is $f$-monochromatic, where $\FS(H)$ is the set of all non-empty finite sums of distinct elements over~$H$.
\end{theorem}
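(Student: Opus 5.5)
We prove Hindman's theorem by the Galvin--Glazer argument with idempotent ultrafilters. The plan is: build an idempotent ultrafilter on~$\NN$, then extract~$H$ from a large color class by a greedy recursion.

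\emph{Algebraic setup.} Let $\beta\NN$ be the space of ultrafilters on~$\NN$, with the Stone topology. It is compact Hausdorff, with basic clopen sets $\hat A = \{p : A \in p\}$. Extend addition by
\[
A \in p + q \iff \{n : A - n \in q\} \in p, \qquad \text{where } A - n = \{m : m+n \in A\}.
\]
Routine checks show three things: $p+q$ is an ultrafilter; $+$ is associative; and for each fixed~$q$ the map $p \mapsto p+q$ is continuous. So $(\beta\NN,+)$ is a compact right-topological semigroup. The set $\NN^* = \beta\NN\setminus\NN$ of non-principal ultrafilters is a closed subsemigroup. We work in~$\NN^*$ so that every member of the ultrafilter is infinite.

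\emph{Ellis--Numakura lemma (the main obstacle).} We show that every compact right-topological semigroup~$S$ has an idempotent. By Zorn's lemma, using compactness to see that intersections of chains are nonempty, pick a minimal nonempty closed subsemigroup~$T$, and fix $x\in T$.
\begin{itemize}
\item The set $T+x$ is compact, being the image of~$T$ under a continuous map. It is a subsemigroup of~$T$, so by minimality $T+x=T$.
\item Hence $Z=\{y\in T: y+x=x\}$ is nonempty. It is closed, as a preimage of a point under a continuous map, and it is a subsemigroup.
\item Minimality gives $Z=T$, so $x+x=x$.
\end{itemize}
This is the one genuinely nontrivial step. It needs care about which side continuity holds on, matching it to the definition of~$+$. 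Apply the lemma to~$\NN^*$ to get a non-principal idempotent~$p$.

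\emph{Extraction.} Since $f$ has finitely many colors, some color class $A=f^{-1}(i)$ lies in~$p$. For $B\in p$, put $B^\star=\{n\in B : B-n\in p\}$. Idempotency gives $B^\star\in p$. Moreover, for $n\in B^\star$ we have $(B^\star - n)\in p$; this follows by rewriting $B^\star-n$ as an intersection of $B-n$ and $\{m: B-(m+n)\in p\}$, both of which are in~$p$ since $p+p=p$.

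Now build $h_0<h_1<\dots$ and sets $A_0=A^\star\supseteq A_1\supseteq\cdots$ in~$p$ as follows.
\begin{itemize}
\item Choose $h_k\in A_k^\star$ larger than $h_{k-1}$. This is possible because $A_k^\star\in p$ and $p$ is non-principal, so $A_k^\star$ is infinite.
\item Set $A_{k+1}=A_k^\star\cap(A_k^\star-h_k)$, which lies in~$p$ by the previous paragraph.
\end{itemize}

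\emph{Verification.} We prove by induction that for every finite nonempty $F\subseteq\{h_j : j\geq k\}$ we have $\sum F\in A_k$. Write $h_k$ for the least element of~$F$ (after reindexing if necessary).
\begin{itemize}
\item If $F=\{h_k\}$, then $h_k\in A_k$ by choice.
\item Otherwise the remaining elements have sum $s\in A_{k+1}\subseteq A_k^\star - h_k$, using the inductive hypothesis and the fact that the sequence $(A_j)$ is decreasing. Hence $s+h_k\in A_k$.
\end{itemize}
Taking $k=0$, every element of $\FS(H)$ lies in $A_0\subseteq A$, where $H=\{h_k:k\in\NN\}$. So $\FS(H)$ is $f$-monochromatic, and $H$ is infinite.
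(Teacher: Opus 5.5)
Your proof is correct. It is the Galvin--Glazer idempotent-ultrafilter argument, and the details check out:
\begin{itemize}
\item With your convention $A\in p+q \iff \{n : A-n\in q\}\in p$, the continuous maps are $p\mapsto p+q$. This is exactly the side you use for $T+x$ and $Z$ in the Ellis--Numakura step.
\item $\NN^*$ is a closed subsemigroup, since $A-n$ is finite whenever $A$ is finite.
\end{itemize}
Two small presentational points. In showing $B^\star-n\in p$, the factor $B-n$ lies in $p$ because $n\in B^\star$; idempotency is only needed for the other factor, applied to $B-n$. The verification also reads best as an induction on $\card F$, simultaneously for all $k$: split off $h_m=\min F$ and then use $A_m\subseteq A_k$.

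The paper does not prove this theorem. It cites Hindman and lists several proofs, Galvin--Glazer among them. Its own machinery is built instead on Towsner's combinatorial proof (half-matches, Towsner sequences and trees, full-matches). It even uses $\FUT$ classically as a black box in the lemmas motivating full-matches and $\PP$-conditions.

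That choice is deliberate. Your route is short and conceptual and adapts immediately to other semigroups. However, it goes through Zorn's lemma and a non-principal idempotent ultrafilter on $\NN$, so it gives no information on how complex $H$ must be relative to $f$. The paper recalls that even Towsner's formalization of this proof needs $\Sigma^1_1\mbox{-}\mathsf{TI}_0$.

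Towsner's route replaces the idempotent by finite combinatorial witnesses that can be located arithmetically in $f$: half-matches with an $f$-computable reservoir, Towsner sequences computable in $f''$, and full-matches arithmetic in $f$. This is precisely what makes the forcing question of Definition~\ref{def:fut-forcing-question} arithmetic, and it drives the cone-avoidance result.

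Relatedly, the paper works with block sequences and $\FU$ rather than $\FS$, because $\FU$ is preserved under passing to $X[Y]$, which its reservoir restrictions require. For the bare statement you were asked to prove, working with $\FS$ directly as you do is fine.
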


Hindman's theorem admits several proofs, including a short one by Baumgartner~\cite{baumgartnerShortProofHindmans1974}, an ultrafilter proof by Galvin and Glazer (see Comfort~\cite{comfortUltrafiltersOldNew1977} or Hindman and Strauss~\cite{hindmanAlgebraStoneCechCompactification2012}), one in topological dynamics by Furstenberg and Weiss~\cite{furstenberg1978topological}, and, more recently, a simple combinatorial proof by Towsner~\cite{towsnerSimpleProofDifficult2012}.

The theorem received a great amount of attention from a meta-mathematical viewpoint. Hindman's original proof, its topological dynamics version and the simple proof by Baumgartner were analyzed by Blass, Hirst and Simpson~\cite{blassLogicalAnalysisTheorems1987} in reverse mathematics. They showed that the two former proofs hold over~$\ACA_0^+$, while the latter holds over the much stronger system $\Pi^1_2\mbox{-}\mathsf{TI}_0$. On the other hand, they proved that Hindman's theorem implies $\ACA_0$ over~$\RCA_0$. Towsner~\cite{towsner2011hindman} showed that the Galvin and Glazer's ultrafilter proof holds in $\Sigma^1_1\mbox{-}\mathsf{TI}_0$. Last, Towsner's simple combinatorial proof~\cite{towsnerSimpleProofDifficult2012} holds over~$\ACA_0^+$. 

In order to better grasp the complexity of Hindman's theorem, restricted versions were also studied in a reverse-mathematical setting~\cite{carlucciWeakStrongRestrictions2016,carlucciWeakVariantHindman2017,carlucci2020new,dzhafarovEffectivenessHindmanTheorem2017}. In particular, Hindman, Leader and Strauss~\cite{hindman2003open} asked for a proof of Hindman's theorem restricted to sums of length at most~2 ($\HT^{\leq 2}$) which does not produce a full solution to Hindman's theorem. Bompard, Liu and Patey~\cite{bompardReverseMathematicsCarlson2022} gave a partial answer by proving that the iterated version of $\HT^{\leq 2}$ implies Hindman's theorem over~$\RCA_0$.

From a purely computability-theoretic viewpoint, the best known upper bound to date was proven by Blass, Hirst and Simpson~\cite{blassLogicalAnalysisTheorems1987} in 1987 :

\begin{theorem}[Blass, Hirst and Simpson]
For every~$\ell \geq 1$ and every computable coloring $f : \NN \to \ell$, there is a $\emptyset^{(\omega)}$-computable infinite set $H \subseteq \NN$ such that $\FS(H)$ is $f$-monochromatic.
\end{theorem}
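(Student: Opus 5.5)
The natural route is to follow one of the $\ACA_0^+$-style proofs that Blass, Hirst and Simpson analyzed: Hindman's original combinatorial proof, or equivalently Towsner's simple combinatorial proof. The point is to check that every object built along the way is arithmetical in $f$, uniformly enough that the whole construction is computable in $\emptyset^{(\omega)}$. Since $f$ is computable, $\emptyset^{(\omega)}$ computes $\emptyset^{(n)}$ uniformly in $n$, and hence decides every arithmetical question about $f$ and finite data. The plan is to build $H = \{h_0 < h_1 < \dots\}$ by finite extension, keeping at each stage a finite initial segment and an infinite ``reservoir'' set $X_s$. The reservoir should be arithmetical, given by an index for a $\emptyset^{(n_s)}$-computable set with $n_s$ computed along the way.

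First I would fix the combinatorial notion driving Towsner's or Baumgartner's argument. In a Galvin--Glazer-free formulation this is the notion of a set (or block sequence) being \emph{large} for a color $i$. Roughly, $X$ is $i$-large when every $\FS$-subsystem of $X$ contains some element of color $i$, in a sense that can be iterated. The key lemma is that for some color $i$ there is a reservoir $X$ such that every $\FS$-subsystem of $X$ meets color $i$ ``everywhere''. From such a reservoir one can pick $h_{s+1}$ so that $h_{s+1}$ and all sums $h_{s+1}+\sigma$, for $\sigma \in \FS(\{h_0,\dots,h_s\})$, have color $i$, and then shrink the reservoir while preserving the largeness property.

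The central step is the complexity count. Each largeness predicate, relativized to a finite sum-pattern, is a $\Pi^1_1$-looking statement over subsystems. In Hindman's or Towsner's proof, however, it is replaced by arithmetical approximations: a level-$k$ notion of largeness that quantifies only over finitely many nested choices and is $\Sigma^0_{k+c}$ or $\Pi^0_{k+c}$ in $f$. One then shows that the construction at stage $s$ needs only level-$k_s$ largeness, with $k_s$ growing, because only the finitely many sums already committed must be protected. So stage $s$ is $\emptyset^{(n_s)}$-computable with $n_s$ computable from $s$, and the union over $s$ is $\emptyset^{(\omega)}$-computable.

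I expect the main obstacle to be exactly this step: showing that the choice of color $i$ and the shrinking of reservoirs can be made using only finitely many jumps at each stage, uniformly. The color has to be fixed once, yet it is a priori defined by a non-arithmetical condition. My first attempt would be to fix $i$ as a non-uniform parameter, which costs only finite information. I would then verify that, once $i$ is fixed, the maintenance step (``given a reservoir $X$ that is $i$-good at level $k+1$, find $h\in X$ and $X'\subseteq X$ that is $i$-good at level $k$ for the enlarged pattern'') is a $\emptyset^{(k+c)}$-computable search that always succeeds. Here $i$-good at level $k$ is the arithmetized largeness predicate, and $c$ is a constant independent of $k$. The uniform jump bookkeeping then yields the theorem.
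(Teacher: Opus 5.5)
The paper does not prove this statement; it only quotes it from Blass, Hirst and Simpson. Judged on its own, your plan has a genuine gap. All of the combinatorial content is deferred, and the part you do sketch is internally inconsistent.

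\textbf{The level bookkeeping does not work.} Your maintenance step turns a reservoir that is $i$-good at level $k+1$ into one that is $i$-good at level $k$. A reservoir of any finite level therefore supports only finitely many stages, and nothing in the plan makes $k_s$ grow. To run forever you need a reservoir that is good at all levels at once, and the maintenance step would then have to produce a single $X'$ with that property. Producing separate witnesses $X'_k$, one for each $k$, is not enough. Checking that a candidate is good at every level is a universal question over $\emptyset^{(\omega)}$, not a search using finitely many jumps.

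\textbf{The attribution to the literature does not help.} Baumgartner's largeness (``every $\FS$-subsystem of $X$ meets color $i$''), which your description matches, is genuinely $\Pi^1_1$. His proof is only known to go through in $\Pi^1_2\mbox{-}\mathsf{TI}_0$. In Towsner's proof, which this paper adapts, the arithmetical piece is the construction of a full-match from half-matches and Towsner trees (\Cref{prop:towsner-seq-arith} and \Cref{prop:full-matches-arith}). What is not arithmetical is iterating this $\omega$ times and then selecting a branch.

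\textbf{Fixing the color $i$ non-uniformly does not close the gap.} The missing non-uniform information is an infinite sequence of choices, not a single color. At every stage there are finitely many candidates (the elements of a full-match, or the stems of a $\PP$-condition $(\I, X)$). Which of them extends to an infinite solution depends on the entire future of the construction.

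\textbf{What this paper's machinery gives.} Start from the condition in the remark following \Cref{prop:full-matches-arith} and iterate \Cref{lem:positive-requirement-full}. Each application costs finitely many jumps, uniformly in the indices: the canonical coloring, an $\hat f''$-computable Towsner sequence, one more jump for the $\Sigma^0_1(\Tc(\I,X))$ forcing question, and Kreisel's basis theorem. So the sequence $(\I_n, X_n)_n$ is $\emptyset^{(\omega)}$-computable, and every infinite path through the finitely branching tree of stems $\bigcup_n \I_n$ yields a solution. Stems can be discarded, as in Case~2 of \Cref{lem:positive-requirement}, so choosing such a path is an application of K\"onig's lemma over $\emptyset^{(\omega)}$. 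That costs one more jump and gives $\emptyset^{(\omega+1)}$. To my recollection, this is the bound Blass, Hirst and Simpson actually prove and the one usually quoted for their result. To obtain the $\emptyset^{(\omega)}$ bound as printed, you would have to eliminate exactly this branch selection, and your plan offers no mechanism for doing so.
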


Here, $X^{(\omega)}$ is the $\omega$-jump of~$X$, defined inductively from the Turing jump operator $X \mapsto X' = \{ e : \Phi^X_e(e)\downarrow \}$ as follows: $X^{(0)} = X$, $X^{(n+1)} = (X^{(n)})'$, and $X^{(\omega)} = \bigoplus_n X^{(n)}$. The unique positive lower bound was proven in the same paper:

\begin{theorem}[Blass, Hirst and Simpson]
There is a computable coloring $f : \NN \to 2$ such that for every infinite set~$H \subseteq \NN$ 
such that $\FS(H)$ is $f$-monochromatic, $H$ computes the halting set.
\end{theorem}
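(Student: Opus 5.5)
We reproduce the Blass--Hirst--Simpson lower bound by building a computable coloring $f : \NN \to 2$ from the structure of binary expansions together with the enumeration of $\emptyset'$. For $n \geq 1$ write $\lambda(n)$ for the position of the least significant $1$-bit of $n$ and $\mu(n)$ for the position of the most significant one. Fix a computable enumeration of $\emptyset'$ and let $\emptyset'_s$ be its stage-$s$ approximation. Define
\[
  f(n) = \card\{ m < \lambda(n) : m \in \emptyset'_{\mu(n)} \setminus \emptyset'_{\lambda(n)} \} \bmod 2 .
\]
This is computable, since both stages are determined by $n$. The goal is then to show that any infinite $H$ with $\FS(H)$ monochromatic computes $\emptyset'$.

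The first step is a normalization of $H$. Given such an $H$, we would computably in $H$ thin it out to an infinite $H_0 = \{h_0 < h_1 < \dots\} \subseteq H$ such that the binary blocks are separated, that is, $\mu(h_i) < \lambda(h_{i+1})$ for all $i$. This uses the standard trick: among the infinitely many elements of $H$, there are two with the same residue modulo $2^{k}$ for any $k$. Their sum, or a regrouping of finite sums, yields elements of $\FS(H)$ whose low bits vanish up to any desired position. We then replace $H$ by a suitable sequence of disjoint finite sums. This remains inside $\FS(H)$, so $\FS(H_0)$ is still monochromatic. The whole procedure is effective in $H$.

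The key claim is that for every $x$ and every $h \in H_0$ with $\lambda(h) > x$, we have $x \in \emptyset'$ iff $x \in \emptyset'_{\mu(h)}$. Granting this, $H_0$, and hence $H$, computes $\emptyset'$: to decide $x$, search for an $h \in H_0$ with $\lambda(h) > x$ and check whether $x \in \emptyset'_{\mu(h)}$.

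The main obstacle is proving the key claim. Suppose instead that some $x < \lambda(h)$ enters $\emptyset'$ only at a stage $t > \mu(h)$. Choose $h' \in H_0$ with $\lambda(h') > t$. Now compare the colors of $h$, $h'$ and $h+h'$. The sum has $\lambda(h+h') = \lambda(h)$ and $\mu(h+h') = \mu(h')$, so its count includes the elements below $\lambda(h)$ enumerated in the interval $(\lambda(h), \mu(h')]$. That interval splits into the part up to $\mu(h)$, which is counted by $f(h)$, and the later part, which contains $x$. To get a parity contradiction, we must handle the other elements below $\lambda(h)$ that were enumerated late. The route I would try first is to refine the coloring so that it records only the least $m < \lambda(n)$ enumerated in that window, with its parity pattern. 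Alternatively, one can use $3$-term sums $h + h' + h''$ to cancel the terms. If the parity bookkeeping fails, the fallback is to follow Blass--Hirst--Simpson and use a coloring that counts, modulo $2$, the pairs of adjacent $1$-blocks whose gap encodes a late enumeration. Either way, the reduction combines the monochromaticity of a few carefully chosen sums with a least-counterexample argument over $x$, proceeding by induction on $x$.
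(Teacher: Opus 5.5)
Your proposal has a genuine gap, and for the coloring you define it cannot be closed, because the key claim is false. Your $f(n)$ depends only on the pair $(\lambda(n),\mu(n))$, so it is a computable $2$-coloring $G(s,t)=f(2^s+2^t)$ of pairs $s<t$ in disguise. By Seetapun's cone avoidance theorem for $\RT^2_2$, there is an infinite $G$-homogeneous set $S=\{u_0<u_1<\cdots\}$ with $\emptyset'\not\le_T S$. Let $H=\{2^{u_{2i}}+2^{u_{2i+1}} : i\in\NN\}$. Every $n\in\FS(H)$ has binary support $\bigcup_{i\in I}\{u_{2i},u_{2i+1}\}$. Hence $\lambda(n)<\mu(n)$ both lie in $S$, and $f(n)=G(\lambda(n),\mu(n))$ is the homogeneous color. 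So $\FS(H)$ is monochromatic, yet $H\le_T S$ does not compute $\emptyset'$. (For your specific $f$, $G$ is even stable, since $\lim_t G(s,t)$ is the parity of $\card\{m<s : m\in\emptyset'\setminus\emptyset'_s\}$.) The same argument defeats your suggested refinement that records the least late-enumerated $m<\lambda(n)$. Comparing $3$-term sums cannot help either, because the defect lies in the coloring itself: any coloring that only sees the two extreme bits of $n$ admits solutions avoiding the cone above $\emptyset'$.

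The missing idea is a coloring that is additive over \emph{all} internal gaps of the support, so that monochromaticity can isolate the single gap between two solution elements. This is the Blass--Hirst--Simpson coloring recalled in \Cref{prop:towsner-zp}: in the finite-union form, $f(a)$ is the parity of $\VSG(a)$. Let $a<b<c$ be elements of a normalized solution, with $\min c$ so large that $\emptyset'_{\min c}\uh_{\max b}=\emptyset'\uh_{\max b}$. Then $\VSG(d\cup c)=\SG(d)+\VSG(c)$ for $d\in\{a,b,a\cup b\}$, while $\SG(a\cup b)=\SG(a)+\SG(b)+[\,(\max a,\min b)\text{ is short}\,]$. Comparing the colors of $c$, $b\cup c$, $a\cup c$ and $a\cup b\cup c$ forces $\SG(a)$ and $\SG(b)$ to be even and the gap $(\max a,\min b)$ not to be short, that is, $\emptyset'_{\min b}\uh_{\max a}=\emptyset'\uh_{\max a}$. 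So the correct key claim concerns the gap between two solution elements, not the span of a single element. One decides $x\in\emptyset'$ by finding $a<b$ in the solution with $\max a>x$ and checking whether $x\in\emptyset'_{\min b}$. Your fallback sentence gestures at this coloring but gives no argument, so as written the proposal does not prove the theorem.

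Separately, your normalization step is misstated: two elements with the same residue $r$ modulo $2^k$ sum to $2r$, not to $0$. The standard fix applies the pigeonhole principle to the partial sums $h_1, h_1+h_2,\dots,h_1+\dots+h_{2^k}$ to obtain a block of consecutive elements whose sum is divisible by $2^k$. The resulting $H_0$ then lies in $\FS(H)$ rather than in $H$.
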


In 2026, Liao~\cite{liao2026recursive} improved a negative $\Delta^0_2$ lower bound by Blass, Hirst and Simpson, yielding the following $\Sigma^0_2$ negative lower bound:

\begin{theorem}[Liao]
There is a computable coloring $f : \NN \to 2$ with no $\Pi^0_3$ infinite set~$H \subseteq \NN$ such that $\FS(H)$ is $f$-monochromatic.
\end{theorem}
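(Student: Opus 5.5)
The plan is to build $f$ by a computable priority construction that diagonalizes against every $\Pi^0_3$ set. Fix an enumeration $(W_e)_{e\in\NN}$ of all $\Pi^0_3$ sets, with $n \in W_e$ iff $\forall x \exists y \forall z\, R_e(n,x,y,z)$ for a computable $R_e$. For each $e$ there is a requirement
\[
\Rc_e:\ W_e \text{ infinite} \implies \FS(W_e) \text{ is not } f\text{-monochromatic}.
\]
Because $\Pi^0_3$ membership is only decided in the limit by a $\emptyset'''$-type guessing procedure, I would organize the $\Rc_e$ on a tree of strategies in the style of $\mathbf{0}'''$-arguments. Each node guesses, for finitely many candidates $n$, whether the $\Pi^0_2$ predicates $\exists y\forall z\, R_e(n,x,y,z)$ hold for all $x$, using the usual expansionary-stage approximation. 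The true path is then $\emptyset'''$-computable, and every node on it sees correct outcomes infinitely often.

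The coloring is computable because $f(n)$ is fixed at stage $n$ once and for all. I would make this color depend on the binary structure of $n$: write $\lambda(n)$ and $\mu(n)$ for the least and greatest exponents in the binary expansion of $n$. At stage $n$, assign the color by consulting the node of the current approximation to the true path that is associated with the interval $[\lambda(n),\mu(n)]$.

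The key combinatorial input, in the spirit of Blass, Hirst and Simpson, is a block property. Any infinite $H$ with $\FS(H)$ monochromatic has an infinite subset $H'\subseteq \FS(H)$ whose elements have pairwise disjoint, increasing binary blocks, so $\lambda(a+b)=\lambda(a)$ and $\mu(a+b)=\mu(b)$ for $a<b$ in $H'$. Since $\FS(H')\subseteq\FS(H)$, it suffices to refute monochromaticity on sums of elements with disjoint blocks.

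The strategy for $\Rc_e$ at a node $\sigma$ then works as follows. It waits for candidates $a$ that currently look like members of $W_e$ (their $\Pi^0_2$ clauses have been expansionary at $\sigma$-stages). It then arranges that numbers whose lowest bit agrees with that of such an $a$ and whose top block is placed later receive a color depending on $\lambda$. The aim is that for true members $a<b$ of $W_e$ with disjoint blocks, $f(a+b)\neq f(a)$. Since the color of $a+b$ is determined only at stage $a+b\ge b$, when $\sigma$ has seen more of the approximation, the node can use its guess at that stage. The verification would argue along the true path: if $W_e$ is infinite, then infinitely many true elements $a$ are eventually correctly guessed at $\sigma$-stages, and among them one finds a pair $a<b$ with disjoint blocks whose sum was colored after $\sigma$'s guess about $a$ had stabilized. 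Injury from nodes to the left is finite, and nodes to the right are initialized at $\sigma$-expansionary stages.

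I expect the main obstacle to be the interaction between the guessing and the requirement that $f(a+b)$ be set at the fixed time $a+b$. The guess about $a$ may still be wrong at stage $a+b$ for many $b$, and we cannot choose which pair $a,b$ the opponent set contains. To handle this I would first try a counting argument. Reserve for $\sigma$ a sparse, computable set of bit positions, so that $\lambda$-values in $\sigma$'s region give $\sigma$ control of the color. Then argue that an infinite $\Pi^0_3$ set, after passing to the block subset, must contain infinitely many $b$ beyond the stage at which the relevant $\Pi^0_2$ clauses for a fixed true $a$ have stabilized. That makes the pair $a,b$ exist without our choosing it, and $\sigma$'s color decision at stage $a+b$ is then correct.
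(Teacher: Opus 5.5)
The paper does not prove this statement; it is quoted from Liao as background. So there is no in-paper route to compare with, and your proposal has to stand on its own. It has a genuine gap, at exactly the step that carries the content of the theorem.

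What the theorem needs is a rule, fixed computably at stage $n$, that forces some pair of elements of $\FS(W_e)$ to receive different colours, where the pair is chosen by the opponent and not by you. You never state such a rule: ``consult the node associated with $[\lambda(n),\mu(n)]$'' and ``a colour depending on $\lambda$'' do not determine $f$. The mechanism you do describe cannot supply one.

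\begin{itemize}
\item No node ever knows that $a\in W_e$. This is a $\forall x\,\exists y\,\forall z$ fact, and its inner clauses are $\Sigma^0_2$, not $\Pi^0_2$ as you write. The fact is reflected only in whether $\sigma$ lies on the true path, so $\sigma$'s guesses are reliable only at $\sigma$-stages, which may be sparse.
\item Meanwhile $f(a+b)$ is fixed at stage $a+b$, a time dictated by the number itself, and which sums lie in $\FS(W_e)$ is dictated by the opponent. Nothing in your sketch prevents every relevant sum from being coloured at a stage where $\sigma$ is not visited at all.
\item Your counting remedy does not close this. A stage by which finitely many clauses for a true $a$ have settled does exist, and any infinite set has infinitely many $b$ beyond it. But passing that stage does not make the guess at stage $a+b$ correct. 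The same rule also fires for false candidates and commits colours you cannot retract. A stage by which all clauses have settled does not exist.
\end{itemize}

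What is missing is a commitment that is made once, stays in force at every later stage whatever the current approximation says, and is harmless if the candidate turns out false. For instance, having settled on $a=\sum D$ with $D\subseteq W_e$ finite, give every later $n\equiv a\pmod{2^{\mu(a)+1}}$ the colour opposite to $f(a)$. This suffices for $W_e$, because $\FS(W_e\setminus D)$ contains arbitrarily large $b$ with $\lambda(b)>\mu(a)$. You would also need a way to keep the commitments of different nodes compatible. A number whose binary expansion has $k$ ones splits as $a+b$ in $k-1$ ways, so many nodes compete for $f(n)$.

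Keeping such commitments alive is precisely where the $\Pi^0_3$ level bites. Under the usual leftmost-outcome convention, a node deciding a $\Pi^0_3$ fact must place its $\Pi_3$ outcome to the right of infinitely many outcomes. Otherwise, being the true outcome would be a Boolean combination of $\Pi^0_2$ facts. Hence nodes below it are in general passed on the left at infinitely many stages, and your claim that injury from the left is finite fails for the tree you need.

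Two further points concern the block reduction.
\begin{itemize}
\item It is stated too loosely. $H'\subseteq\FS(H)$ does not by itself give $\FS(H')\subseteq\FS(H)$; as the paper remarks, finite sums are not a closure operation. The elements of $H'$ must be sums over pairwise disjoint finite subsets of $H$.
\item More seriously, it cannot be used the way your strategy uses it. Members of $W_e$ need not have disjoint binary blocks at all (take $W_e$ inside the odd numbers), so a pair $a<b$ of true members with disjoint blocks may not exist. Also, $H'$ is merely $W_e$-computable, not $\Pi^0_3$, so your guessing does not apply to it.
\end{itemize}
You must take candidates from $\FS(W_e)$, which is uniformly $\Pi^0_3$, and record a summand set $D\subseteq W_e$ for each. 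This is needed because $a+b\in\FS(W_e)$ is guaranteed only when $a$ and $b$ are sums over disjoint subsets of $W_e$.
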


In this article, we show the non-optimality of the computability-theoretic upper bound by proving that Hindman's theorem admits cone avoidance for non-arithmetic cones. More precisely, we prove the following theorem:

\begin{maintheorem}\label[maintheorem]{main:cone-avoidance}
Let $C$ be a set of non-arithmetic degree.
For every $\ell \geq 1$ and every coloring $f : \NN \to \ell$ of arithmetic degree,
there is an infinite set $H \subseteq \NN$ such that $\FS(H)$ is $f$-monochromatic, and $C \not \leq_T H$.
\end{maintheorem}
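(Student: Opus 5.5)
Since $f$ is arithmetic, it is computable from $\emptyset^{(n)}$ for some~$n$, and $C$ being non-arithmetic means $C \not\leq_T \emptyset^{(m)}$ for every~$m$. The plan is to run a Mathias-style forcing for Hindman's theorem. Conditions are pairs $(F, X)$ where $F$ is a finite set of integers with $\FS(F)$ monochromatic, and $X$ is an infinite reservoir. We require that $X$ is arithmetically definable, in fact $\emptyset^{(k)}$-computable for some~$k$, so that $C \not\leq_T X$ is automatic. We also require that every element of $\FS(X)$ has its binary support above $\max F$ and has the same color as every sum $s+x$ with $s\in \FS(F)$ and $x \in \FS(X)$. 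To keep this possible we work throughout with the idempotent-ultrafilter/Galvin–Glazer viewpoint replaced by its effective combinatorial substitute: Towsner's proof, or the Blass–Hirst–Simpson $\emptyset^{(\omega)}$ bound, relativized. These give the following fact. For every arithmetic coloring $g$ and arithmetic IP-reservoir $X$, there is an IP-subreservoir $Y$ with $\FS(Y) \subseteq \FS(X)$ on which $g$ is monochromatic, and $Y \leq_T \emptyset^{(\omega)}\oplus X$. This alone does not suffice, since $\emptyset^{(\omega)}$ may compute~$C$. So the key is to work instead with \emph{$\FS$-sequences} (block sequences) and to use a density version. Extending $F$ by one block from $\FS(X)$ and thinning $X$ to preserve monochromaticity of sums must be achievable within finitely many jumps, not $\omega$ many.

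Concretely, I would fix the color~$i$ in advance by a "largeness" notion. Call an arithmetic set $X$ \emph{$i$-good} if for every arithmetic IP-subreservoir of $X$ one can find further $\FS$-elements of color~$i$, in the sense of a Galvin–Glazer ultrafilter restricted to arithmetic sets. The first step is to show that there is a nonprincipal idempotent-like filter $\mathcal{U}$ on the arithmetic sets with the following properties: it decides every arithmetic set, it contains $f^{-1}(i)$ for some~$i$, and for $A \in \mathcal{U}$ the set $A^\star = \{x \in A : A - x \in \mathcal{U}\}$ is in $\mathcal{U}$. Such a "countable idempotent ultrafilter on the arithmetic Boolean algebra" should exist by compactness, since every finite stage only mentions finitely many arithmetic sets. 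Given $\mathcal{U}$, the standard Galvin–Glazer construction produces $H$ one element at a time, each step asking only that a finite Boolean combination of sets in $\mathcal{U}$ be nonempty. Conditions become $(F, A)$ with $A \in \mathcal{U}$ arithmetic and $F + \FS$-closure constraints encoded by $A$.

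The second step is cone avoidance of the requirement $\Phi_e^H \neq C$. Given $(F, A)$, consider the $\Sigma$-arithmetic question: do there exist finite extensions $F_1, F_2 \subseteq A$ that are compatible with $\mathcal{U}$ and that force $\Phi_e$ to disagree on some input? Since $\mathcal{U}$ restricted to the relevant finitely many sets is arithmetic, "the set of $x$ with $\Phi_e^{F\cup E}(x)\downarrow = v$ for some valid $E$ from a $\mathcal{U}$-set" is arithmetic. If for every $x$ the $\mathcal{U}$-large extensions force a unique value, then $C$ would be arithmetic (computable in the finitely many jumps needed to evaluate membership in $\mathcal{U}$ of the relevant sets), which is a contradiction. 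Otherwise we split and extend to force a disagreement, or we find an $x$ where no $\mathcal{U}$-large set forces convergence, and then thin $A$ within $\mathcal{U}$ to force divergence.

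The main obstacle is the first step: making the ultrafilter-based forcing arithmetic at each stage. Membership in $\mathcal{U}$ of a given arithmetic set must be arithmetic uniformly enough that the case analysis in the second step has bounded complexity. I would instead let $\mathcal{U}$ be generic but only use, at each requirement, a finite fragment, namely a finite idempotent-closed family of arithmetic sets of bounded jump level whose existence follows from Hindman's theorem relativized via compactness. If that fails, the fallback is Towsner's combinatorial "full-match" notions, treated level by level.
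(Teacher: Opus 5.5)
\textbf{There is a genuine gap, in your first step, which carries the whole argument.}

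You posit an ultrafilter $\mathcal{U}$ on the algebra of arithmetic sets such that $A^\star=\{x\in A : A-x\in\mathcal{U}\}\in\mathcal{U}$ whenever $A\in\mathcal{U}$. In particular $A^\star$ must itself be arithmetic. You justify its existence ``by compactness, since every finite stage only mentions finitely many arithmetic sets''. That is not so. The $\star$-condition for a single $A$ involves the infinitely many sets $A-x$, and it asks that the set they determine coincide with \emph{some} arithmetic set lying in $\mathcal{U}$. That is an existential over arithmetic sets of a closed condition, not a closed condition on $\mathcal{U}$, so compactness gives nothing.

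Worse, such a $\mathcal{U}$ would prove far more than the theorem. Take $A=f^{-1}(i)\in\mathcal{U}$ and $B=A^\star$. The usual Galvin--Glazer lemma goes through verbatim: $x\in A^\star$ gives $A-x\in\mathcal{U}$, hence $(A-x)^\star\in\mathcal{U}$ and $(A-x)^\star\subseteq A^\star-x$. So $B\cap\bigcap_{s\in\FS(F)}(B-s)\in\mathcal{U}$ is infinite whenever $\FS(F)\subseteq B$, and a greedy $B$-computable search yields $H$ with $\FS(H)\subseteq A$. That is an \emph{arithmetic} solution for every arithmetic coloring. It would improve the $\emptyset^{(\omega)}$ upper bound recalled in the introduction to an arithmetic one, which is a well-known open problem, and it would make cone avoidance trivial. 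So this step cannot be expected from a compactness argument.

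Your fallbacks do not repair this. A family closed under $A\mapsto A^\star$ and under translations is never finite in a useful sense. And in your second step, ``extensions compatible with $\mathcal{U}$'' of unbounded length require evaluating $\mathcal{U}$ on sets of unbounded complexity. So the claim that the splitting question is arithmetic of a fixed level is unsupported.

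\textbf{The missing idea} is a forcing whose forcing question is arithmetic \emph{without} knowing which partial solution extends to an infinite one. The paper does this as follows.
\begin{itemize}
\item It works with the Finite Union Theorem and block sequences, using tree-like conditions $(\I,X)$. These have finitely many stems $F\in\I$, each with $\FU(F)$ $f$-monochromatic, and an arithmetic reservoir $X$.
\item The conditions satisfy the $f$-matching invariant: for every $b\in\FU(X)$, some $F\in\I$ has $\FU(F\cup\{b\})$ $f$-monochromatic. This invariant replaces your largeness filter.
\item The question $(\I,X)\qvdash_F\varphi(G)$ quantifies over one level of the Towsner tree $\Tc(\I,X)$ of the canonical coloring. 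It is therefore $\Sigma^0_1$ in the arithmetic Towsner sequence $\Sc(\I,X)$.
\item A negative answer is handled by Kreisel's basis theorem on the finitely branching Towsner tree, which thins $X$ to a $\Delta^0_2(X)$ reservoir.
\item A positive answer replaces $F$ by the finitely many extensions $F\cup X[\h E]$, and the Towsner tree property (\Cref{prop:towsner-tree}) shows that $f$-matching survives.
\item Stems that cannot grow are discarded.
\end{itemize}
Then $\{x : (\I,X)\qvdash_F x\in W^G_e\}$ is arithmetic, hence differs from $C$, and this gives the diagonalization.

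Your opening diagnosis is right: single-stem Mathias conditions cannot be kept extendible within finitely many jumps. The multi-stem conditions with the matching invariant are exactly what resolves this.
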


In particular, letting $C$ be $\emptyset^{(\omega)}$, this shows that every arithmetic instance of Hindman's theorem admits an infinite solution which does not compute $\emptyset^{(\omega)}$.
Note that this does not rule out the possibility that Hindman's theorem is equivalent to $\ACA_0^+$ using multiple applications. Indeed, \Cref{main:cone-avoidance} cannot be iterated to build an $\omega$-model of Hindman's theorem not containing $\emptyset^{(\omega)}$, as the conclusion only holds for Turing reducibility instead of arithmetic reducibility.

\Cref{main:cone-avoidance} is obtained by adapting the combinatorics of Towsner~\cite{towsnerSimpleProofDifficult2012} to obtain a notion of forcing with good first-jump control to produce solutions to Hindman's theorem. The approach was first used by the authors in~\cite{liu2026reverse} to transform a Towsner-like proof of a variable word theorem by Carlson and Simspon~\cite{carlson1984dual} into a notion of forcing with good first-jump control.

Motivated by the combinatorial features of the coloring witnessing the lower bound of Blass, Hirst and Simpson~\cite{blassLogicalAnalysisTheorems1987}, we also study a class of colorings, called \emph{simple colorings}, and prove that Hindman's theorem restricted to this class is equivalent to~$\ACA_0$ over~$\RCA_0$.

\subsection{Finite Union Theorem}

Finite sums of distinct elements is not a closure property, in the sense that if $Y \subseteq \FS(X)$, then $\FS(Y)$ is not in general a subset of $\FS(X)$. For these reasons, Hindman's theorem is often more conveniently formulated in terms of finite unions of finite sets. The corresponding statement, known as the Finite Union Theorem, is computably equivalent to Hindman's theorem and this equivalence holds over~$\RCA_0$. Therefore, all the computability-theoretic and reverse-mathematical bounds can be stated indistinctly under both formalisms.

We write $\Pfin(\NN)$ for the collection of all non-empty finite subsets of~$\NN$, and use small letters $a, b, c$ to denote its members. A  \emph{disjoint sequence} is a non-empty (finite or infinite) sequence of non-empty finite sets $X = \{ a_0, a_1, \dots \}$ such that for every~$i < j$, $a_i \cap a_j = \emptyset$. It is a \emph{block sequence} if furthermore $\max a_i < \min a_j$.
Given a disjoint sequence $X = \{a_0, a_1, \dots \}$, write $\FU(X)$ for the collection of all non-empty finite unions of $X$.

\begin{theorem}[Finite Union Theorem]\label[theorem]{thm:fut}
For every~$\ell \geq 1$, and every coloring $f : \Pfin(\NN) \to \ell$, there is an infinite block sequence $H \subseteq \Pfin(\NN)$ such that $\FU(H)$ is $f$-homogeneous.
\end{theorem}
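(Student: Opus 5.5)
The plan is to derive the Finite Union Theorem from Hindman's theorem via the standard binary-expansion correspondence, so that nothing beyond the Hindman theorem stated above is needed. Identify each $a \in \Pfin(\NN)$ with the positive integer $n_a = \sum_{i \in a} 2^i$. This is a bijection between $\Pfin(\NN)$ and $\NN \setminus \{0\}$, and disjoint unions correspond to sums: if $a \cap b = \emptyset$ then $n_{a\cup b} = n_a + n_b$. Given $f : \Pfin(\NN) \to \ell$, define $g : \NN \to \ell$ by $g(n_a) = f(a)$ and $g(0)=0$. The naive idea is to apply Hindman's theorem to $g$ and pull the set $H$ back. The catch is that the supports of elements of $H$ need not be disjoint, so sums do not correspond to unions. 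The real work is to refine $H$ to a set whose binary supports form a block sequence, while keeping inside $\FS(H)$.

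The refinement step uses the following standard fact: for every infinite $H$ and every $m$, there is a nonempty finite $F \subseteq H$ with $2^m \mid \sum F$. This is pigeonhole: among any $2^m$ elements $h_1,\dots,h_{2^m}$ of $H$, two of the partial sums $h_1+\dots+h_j$ (with $0 \le j \le 2^m$) are congruent mod $2^m$, and their difference is a sum of distinct elements of $H$. We then build $y_0 < y_1 < \dots$ inductively, each $y_k = \sum F_k$ with the $F_k \subseteq H$ finite, nonempty and pairwise disjoint, using elements of $H$ beyond those already used. Take $m_k$ larger than the bit length of $y_{k-1}$ and choose $F_k$ so that $2^{m_k} \mid y_k$. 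Then the binary supports of the $y_k$ are in block position, that is, $\max \operatorname{supp}(y_{k-1}) < \min \operatorname{supp}(y_k)$. Since the $F_k$ are pairwise disjoint, every finite sum of distinct $y_k$ is a finite sum of distinct elements of $H$. Hence $\FS(\{y_k\}) \subseteq \FS(H)$ is $g$-monochromatic, and all these sums are positive.

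Finally, let $a_k = \operatorname{supp}(y_k)$. Then $(a_k)$ is an infinite block sequence. For a finite nonempty index set $I$, the union $\bigcup_{k\in I} a_k$ corresponds to $\sum_{k \in I} y_k$ by disjointness, so $f$ is constant on $\FU(\{a_k\})$. The only delicate point is the divisibility and disjointness bookkeeping in the inductive step, in particular making sure later $F_k$ avoid earlier ones, which is easy because $H$ is infinite. This is also the step I expect to be the main obstacle. Everything is uniformly computable from $H$ and $f$, which is consistent with the claimed computable equivalence.
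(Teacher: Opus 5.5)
Your argument is correct: it is the standard binary-expansion reduction of the Finite Union Theorem to Hindman's theorem. The paper gives no proof of its own and only asserts that the two statements are computably equivalent over $\RCA_0$; your proof makes explicit the direction of that equivalence needed here. The one slip is that $H$ may contain $0$, in which case the pigeonhole step can return $F_k=\{0\}$, giving $y_k=0$ with empty support. You should first pass to $H\setminus\{0\}$, which is harmless since $\FS(H\setminus\{0\})\subseteq\FS(H)$.
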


We shall therefore prove our main theorem in the following form:

\begin{repmaintheorem}[reformulated]{main:cone-avoidance}
Let $C$ be a set of non-arithmetic degree.
For every $\ell \geq 1$ and every coloring $f : \Pfin(\NN) \to \ell$ of arithmetic degree,
there is an infinite block sequence $H \subseteq \Pfin(\NN)$ such that $\FU(H)$ is $f$-monochromatic, and $C \not \leq_T H$.
\end{repmaintheorem}

\subsection{Notation}

Given two sets~$a, b \subseteq \Pfin(\NN)$, we write $a < b$ if $\max a < \min b$.
Given two (finite or infinite) sets $F, X \subseteq \Pfin(\NN)$, we write $F < X$ if for every~$a \in F$ and $b \in X$, $a < b$.

Let $F, X$ be two block sequences. We write $F \preceq X$ if $F$ is an initial segment of~$X$, that is, $X = F \cup Y$ for some block sequence $Y > F$. 
Given an infinite block sequence $X = \{ a_0 < a_1 < \cdots \} \subseteq \Pfin(\NN)$ and a finite set $b \in \Pfin(\NN)$, we write $X[b]$ for the set $\bigcup_{s \in b} a_s$. Accordingly, given a finite or infinite block sequence $F \subseteq \Pfin(\NN)$, we write $X[F]$ for the block sequence $\{ X[b] : b \in F \}$. Note that $\FU(X[F]) = X[\FU(F)]$.

\section{Avoiding non-arithmetic cones}

The goal of this section is to prove \Cref{main:cone-avoidance}, that is, cone avoidance of the Finite Union Theorem for non-arithmetic cones.
In what follows, fix an arithmetic coloring $f : \Pfin(\NN) \to \ell$ for some~$\ell \geq 1$.
We first recall in \Cref{sec:core-combinatorics} the core combinatorics from Towsner~\cite{towsnerSimpleProofDifficult2012}, and study their computability-theoretic bounds in~\Cref{sec:arithmetic-bounds}, before using them to design a notion of forcing in \Cref{sec:notion-of-forcing}.

\subsection{Core combinatorics}\label[section]{sec:core-combinatorics}

Towsner introduced the two core notions of half-match and full-match, the latter notion providing a sufficiently large set of elements, one of which being extensible into an infinite solution.

\begin{definition}[Towsner~\cite{towsnerSimpleProofDifficult2012}]
Let $F \subseteq \Pfin(\NN)$ be a finite set and $X$ be an infinite block sequence such that $F < X$.
\begin{itemize}
    \item[(1)] $F$ \emph{half-matches} $X$ if for every~$b \in \FU(X)$, there is some~$a \in F$ such that $f(a \cup b) = f(b)$.
    \item[(2)] $F$ \emph{full-matches} $X$ if for every~$b \in \FU(X)$, there is some~$a \in F$ such that $f(a) = f(a \cup b) = f(b)$.
\end{itemize}
\end{definition}

If $F$ half-matches $X$ for~$f$, then the \emph{witness function} associated to $(\ell, f, F, X)$ is the function $\hat f : \FU(X) \to \ell \times F$ defined by:
\begin{align*}
\hat f : b & \mapsto (f(b), a)\\
& \text{such that } f(a \cup b) = f(b).
\end{align*}
If there is more than one corresponding set~$a$, we choose the least one, for some fixed order. Before going any further, we justify the notion of full-match by proving that, under the assumption that the Finite Union Theorem classically holds, at least one element of~$F$ is extensible into an infinite solution. The argument using $\FUT$, it is only used to support the intuition of the definition:

\begin{lemma}
Let $F \subseteq \Pfin(\NN)$ be a finite set and $X$ an infinite block sequence such that $F$ full-matches~$X$.
There is an infinite block sequence~$H \subseteq F \cup \FU(X)$ such that $H \cap F \neq \emptyset$ and $\FU(H)$ is $f$-monochromatic. 
\end{lemma}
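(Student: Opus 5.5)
Since $F$ full-matches $X$, the witness function $\hat f : \FU(X) \to \ell \times F$ is well defined. Moreover, whenever $\hat f(b) = (i, a)$, we have $f(a) = f(a \cup b) = f(b) = i$. To see this, note that full-matching gives some $a'$ with $f(a')=f(a'\cup b)=f(b)$. So the least $a$ with $f(a\cup b)=f(b)$ is \emph{not} automatically one with $f(a)=f(b)$. To avoid this issue, I redefine the witness for this lemma as the least $a \in F$ satisfying the full-match condition $f(a) = f(a\cup b) = f(b)$. This is a finite coloring of $\FU(X)$ with $\ell \cdot \card F$ colors.

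Pull $\hat f$ back along $X$ to the coloring $g : \Pfin(\NN) \to \ell \times F$ defined by $g(c) = \hat f(X[c])$. Apply the Finite Union Theorem (\Cref{thm:fut}) to $g$ to obtain an infinite block sequence $K$ with $\FU(K)$ $g$-monochromatic, of some color $(i, a)$. Let $Y = X[K]$. This is an infinite block sequence with $Y \subseteq \FU(X)$ and $F < Y$, and since $\FU(X[K]) = X[\FU(K)]$, every $b \in \FU(Y)$ satisfies $f(b) = i$, $f(a) = i$ and $f(a \cup b) = i$.

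Set $H = \{a\} \cup Y$. Since $a \in F$ and $F < X$, we have $a < $ every element of $Y$, so $H$ is an infinite block sequence, $H \subseteq F \cup \FU(X)$ and $H \cap F = \{a\} \neq \emptyset$. Every element of $\FU(H)$ has one of three forms: $a$ itself, some $b \in \FU(Y)$, or $a \cup b$ with $b \in \FU(Y)$. Each of these has color $i$ by the previous paragraph, so $\FU(H)$ is $f$-monochromatic.

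The only delicate point is making sure that the homogeneous color records a witness $a$ satisfying the \emph{full} condition $f(a) = f(a\cup b) = f(b)$, not merely the half-match condition. The half-match-based $\hat f$ defined above does not guarantee $f(a)=f(b)$, which is why I use the full-match witness instead. Everything else is bookkeeping: the block ordering $F < X$, and the identity $\FU(X[K]) = X[\FU(K)]$ noted in the preliminaries.
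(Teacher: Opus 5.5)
Your argument is correct and is essentially the paper's proof: the paper defines $\bar f(b) = (f(a), a)$ for the least $a \in F$ with $f(a) = f(a \cup X[b]) = f(X[b])$, which is exactly your redefined full-match witness pulled back along $X$. It then applies the Finite Union Theorem, sets $H = \{a\} \cup X[Y]$, and does the same three-case check. The one flaw is in the write-up: your opening claim that the half-match witness function $\hat f$ already gives $f(a) = f(a \cup b) = f(b)$ is false as stated (you note this yourself), so delete it and start directly from the full-match witness.
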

\begin{proof}
Let $\bar f : \Pfin(\NN) \to \ell \times F$ be defined by:
\begin{align}\label{eq:full-match-eq1}
\bar f : b & \mapsto (f(a), a) \notag \\
& \text{such that } f(a) = f(a \cup X[b]) = f(X[b]).
\end{align}
Such an $a \in F$ exists since $F$ full-matches~$X$.
By $\FUT$, there is an infinite block sequence $Y \subseteq \Pfin(\NN)$ such that $\FU(Y)$ is $\bar f$-monochromatic for some color $(\ell^*, a) \in \ell \times F$. In other words, by (\ref{eq:full-match-eq1}), for every~$b \in \FU(Y)$,
\begin{align}\label{eq:full-match-eq2}
f(a) = f(a \cup X[b]) = f(X[b]) = \ell^*
\end{align}
Let $H = \{a\} \cup X[Y]$. For any element~$c \in \FU(H)$, either $c = a$, or $c = X[b]$ for some $b \in \FU(Y)$, or $c = a \cup X[b]$ for some~$b \in \FU(Y)$. In either case, by (\ref{eq:full-match-eq2}), $f(c) = \ell^*$. This completes our proof of the lemma.
\end{proof}

In order to construct a full-match, Towsner defined a nested sequence of half-matches, inducing a tree that we shall call a \emph{Towsner tree}, and that will play an essential role in the first-jump control of our notion of forcing. This corresponds to the Graham-Rothschild tree in~\cite{liu2026reverse}.

\begin{definition}
A \emph{Towsner sequence} for~$f$ is a sequence 
$$\Sc = \{ (\ell_0, f_0, F_0, X_0), (\ell_1, f_1, F_1, X_1), \cdots \}$$
such that 
\begin{itemize}
    \item[(1)] $F_n \subseteq \Pfin(\NN)$ is a finite set; $X_n$ is an infinite block sequence; $F_n < X_n$ and $F_{n+1} \cup X_{n+1} \subseteq \FU(X_n)$.
    \item[(2)] $\ell_0 = \ell$ ; $\ell_{n+1} = \ell_n \times F_n$;
    \item[(3)] $F_n$ half-matches $X_n$ for $f_n$;
    \item[(4)] $f_0 = f$ ; $f_{n+1} : \FU(X_n) \to \ell_{n+1}$ is the witness function associated to $(\ell_n, f_n, F_n, X_n)$.
\end{itemize}
\end{definition}

\begin{definition}
Given a Towsner sequence
$$\Sc = \{(\ell_0, f_0, F_0, X_0), (\ell_1, f_1, F_1, X_1), \cdots \}$$
its \emph{Towsner tree} $\Tc_{\Sc}$ is the tree
$$
\bigcup_{n \in \NN} \prod_{i \leq n} F_i
$$
In other words, it is the tree of finite block sequences~$E = \{ a_0 < \dots < a_{n-1} \}$ such that for every $i < n$, $a_i \in F_i$.
\end{definition}

The core property of the Towsner tree is the following:

\begin{proposition}\label[proposition]{prop:towsner-tree}
Let $\Tc$ be the Towsner tree associated to a Towsner sequence
$$\Sc = \{ (\ell_0, f_0, F_0, X_0), (\ell_1, f_1, F_1, X_1), \cdots \}$$
For every~$n \in \NN$ and every $b \in \FU(X_n)$, there is a block sequence $E$ at level~$n$ in $\Tc$ (in other words $E \in \prod_{i \leq n} F_i$) such that for every~$a \in \FU(E)$, $f(a \cup b) = f(b)$.
\end{proposition}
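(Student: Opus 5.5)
The plan is a downward induction along the Towsner sequence. It uses one bookkeeping fact: each $f_{k+1}$ refines $f_k$ on the domain where both are defined. Indeed, $f_{k+1}(c) = (f_k(c), a)$, so $f_{k+1}(c) = f_{k+1}(c')$ implies $f_k(c) = f_k(c')$. Iterating, equality of $f_k$-colors implies equality of $f_j$-colors for every $j \leq k$. The second coordinate of $f_{k+1}(c)$ is an element $a \in F_k$ such that $f_k(a \cup c) = f_k(c)$. Hence if $f_{k+1}(c) = f_{k+1}(c')$, the same $a$ witnesses the half-match for both $c$ and $c'$.

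Fix $n$ and $b \in \FU(X_n)$. I will choose $a_n, a_{n-1}, \dots, a_0$ in this order, with $a_i \in F_i$, maintaining the following invariant $(\ast)_k$ for $k = n+1, n, \dots, 0$:
\begin{quote}
for every $a \in \FU(\{a_k, \dots, a_n\})$, the set $a \cup b$ lies in the domain of $f_k$ and $f_k(a \cup b) = f_k(b)$.
\end{quote}
$(\ast)_{n+1}$ is vacuous, since the family is empty; we only need $b$ to be in $\dom f_{n+1} = \FU(X_n)$, which holds.

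For the step from $(\ast)_k$ to $(\ast)_{k-1}$ with $k \geq 1$, let $a_{k-1} \in F_{k-1}$ be the second coordinate of $f_k(b)$, so that $f_{k-1}(a_{k-1} \cup b) = f_{k-1}(b)$. Take any $a \in \FU(\{a_k,\dots,a_n\})$. By $(\ast)_k$ we have $f_k(a \cup b) = f_k(b)$, so by the bookkeeping fact the same $a_{k-1}$ is the witness for $a \cup b$. This gives
$$f_{k-1}(a_{k-1} \cup a \cup b) = f_{k-1}(a \cup b) = f_{k-1}(b),$$
where the last equality is the projection of $f_k(a \cup b) = f_k(b)$ onto its first coordinate. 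The same projection handles the unions of the form $a \cup b$, and the choice of $a_{k-1}$ handles $a_{k-1} \cup b$. Together these cover all of $\FU(\{a_{k-1},\dots,a_n\})$.

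The point needing care is domain membership: every $a \cup b$ and $a_{k-1} \cup a \cup b$ must lie in $\FU(X_{k-2})$, the domain of $f_{k-1}$ (for $k-1 = 0$, in $\Pfin(\NN)$). This follows from condition (1) of a Towsner sequence. We have $F_{j} \cup X_{j} \subseteq \FU(X_{j-1})$, hence by transitivity everything in $F_j$ or $\FU(X_j)$ with $j \geq k-1$ lies in $\FU(X_{k-2})$. Moreover, since $F_j < X_j$ and $F_{j+1} \subseteq \FU(X_j)$, the sets $a_{k-1} < a_k < \dots < a_n < b$ form a block sequence. So their unions are finite unions of disjoint members of $\FU(X_{k-2})$, and $\FU(X_{k-2})$ is closed under such unions, as each such member is a finite union of blocks of $X_{k-2}$. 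This also shows that $E = \{a_0 < \dots < a_n\}$ is a block sequence in $\prod_{i \leq n} F_i$, i.e.\ at level $n$ of $\Tc$. Finally, $(\ast)_0$ with $f_0 = f$ is exactly the desired conclusion.
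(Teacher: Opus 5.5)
Your proof is correct and is essentially the paper's argument. The paper inducts upward on $n$ with the strengthened claim that every $b$ with $f_{n+1}(b) = (\ell^*, a_0, \dots, a_n)$ satisfies $f(c \cup b) = \ell^*$ for all $c \in \FU(a_0,\dots,a_n) \cup \{\emptyset\}$, and it applies that hypothesis to both $b$ and $a_n \cup b$; this is the recursive form of your downward invariant $(\ast)_k$, and both rest on the same two facts (projection to the first coordinate, and a shared witness for sets with equal colors). You also check explicitly that all the relevant unions lie in the domains $\FU(X_{k-2})$, a point the paper leaves implicit.
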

\begin{proof}
We will prove the following by induction on $n$:
\begin{align}\label{eq:towsner-tree-conclusion}
&\text{let $b\in \FU(X_n)$ and suppose
$ 
f_{n+1}(b) = (\ell^*, a_0, a_1, \cdots, a_n)$,}
\\ \nonumber
&\text{then for every~$c \in \FU(a_0,\cdots, a_n)\cup\{\emptyset\}$,
we have  $f(c \cup b)   = \ell^*$.
}
\end{align}

For $n=0$, let $b\in \FU(X_0)$. Say,
$ 
f_1(b) = (\ell^*, a_0)
$ 
where $\ell^*<\ell$ and $a_0\in F_0$.
Since $f_1$ is the witness function associated to $(\ell_0, f_0, F_0, X_0)$
(and $f_0 = f$),
we have
$$
  f(a_0 \cup b) = f(b) = \ell^*.
$$
This verifies (\ref{eq:towsner-tree-conclusion}) for $n = 0$.

Suppose the conclusion holds for $n\leq n-1$ and now $n = n$.
Let $b\in \FU(X_n)$. Say,
$$
f_{n+1}(b) = (\ell^*, a_0, a_1, \cdots, a_n).
$$
Since $f_{n+1}$ is the witness function associated to $(\ell_n, f_n, F_n, X_n)$,
we have
$$
f_n(a_n \cup b) = f_n(b) = (\ell^*,a_0,\cdots, a_{n-1}).
$$
Combine with induction hypothesis, we have: for every 
$c \in \FU(a_0,\cdots, a_{n-1})\cup\{\emptyset\}$,
$$
f(c \cup a_n \cup b)   =
f(c \cup b) =   \ell^*.$$
This verifies (\ref{eq:towsner-tree-conclusion}) for $n$.

\end{proof}

\subsection{Arithmetic bounds}\label[section]{sec:arithmetic-bounds}

The original article of Towsner~\cite{towsnerSimpleProofDifficult2012} is motivated by reverse mathematics, and therefore formulated in terms of subsystems of second-order arithmetics. However, thanks to the correspondence between computability and definability, his analysis can be straightforwardly recasted in the computability-theoretic setting, with the following bounds:

\begin{proposition}[Towsner~\cite{towsnerSimpleProofDifficult2012}]\label[proposition]{prop:half-matches-comp}
For every coloring $f : \Pfin(\NN) \to \ell$ with $\ell \geq 1$,
there is a finite set~$F$ and an $f$-computable block sequence $X$
such that $F$ half-matches $X$ for~$f$.
\end{proposition}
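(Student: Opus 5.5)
The plan is to argue by contradiction via a greedy construction. The only block sequences $X$ we will ever test are the trivially computable \emph{tails} $T_m = \{\{m\},\{m+1\},\dots\}$, for which $\FU(T_m)$ is the set of all finite sets $b$ with $\min b \geq m$. Since $T_m$ is computable outright, it suffices to show that some finite $F$ half-matches some $T_m$ with $F < T_m$. The remaining content of the proof is a finite combinatorial argument showing this cannot fail forever.

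Suppose towards a contradiction that no finite $F$ half-matches any tail beyond it. Then for every finite $F$ and every $m > \max\bigcup F$ there is a \emph{failure witness} $b$ with $\min b \geq m$ such that $f(a \cup b) \neq f(b)$ for all $a \in F$. Build a block sequence $c_0 < c_1 < \cdots$ recursively as follows. At stage $n$, let $F_n$ consist of the suffix unions $c_k \cup c_{k+1} \cup \dots \cup c_{n-1}$ for $k < n$, possibly closed under $\FU$. Let $c_n$ be a failure witness for $F_n$ lying beyond $c_{n-1}$. The point of using suffix unions is that, for $i<j$, a union ending at $c_j$ has the form $a \cup c_j$ with $a \in F_j$. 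The witnessing property then forbids $f(a\cup c_j) = f(c_j)$.

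We then derive a contradiction from the pigeonhole principle over the $\ell$ colors. Direct pigeonhole on the $\ell+1$ sets $c_0,\dots,c_\ell$ gives nothing, because the relevant unions are not themselves witnesses. So I would run an induction on the number of colors still available. At each level we pass to a tail of the sequence of witnesses, replacing the blocks by suffix unions, on which one more color is excluded for all elements of the relevant $\FU$. After at most $\ell$ such refinements no color remains, which is absurd. To make the refinement go through, the failure witnesses should be chosen against $F_n = \FU(c_0,\dots,c_{n-1})$ rather than just the suffix unions. Then every union $a \cup c_j$ that arises in the refinement lies in the scope of the witnessing condition.

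The main obstacle is exactly this bookkeeping. The color-exclusion at one level must persist when the blocks are merged into unions at the next level, so that the argument is really a finite iteration of length about $\ell$ rather than a single pigeonhole step. I would first try to prove, by induction on $j \leq \ell$, that there is a block sequence of $\FU(c_0,c_1,\dots)$ all of whose finite unions avoid $j$ fixed colors; the case $j=\ell$ is the desired contradiction. Finally, the whole search is $f$-computable: given a candidate $F$, failure is witnessed by some finite $b$. Nevertheless, the \emph{existence} of a good pair $(F,T_m)$ is all that the statement asserts, and the corresponding $X = T_m$ is computable.
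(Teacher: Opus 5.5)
\textbf{There is a genuine gap, and it is in your first reduction.} It is false in general that some finite $F$ half-matches some tail $T_m$. Take the computable coloring $f(b)=0$ if $\card b = 1$ and $f(b)=1$ otherwise. For any non-empty finite $F$ and any $m > \max\bigcup F$, the singleton $b=\{m\}\in\FU(T_m)$ gives $f(a\cup b)=1\neq 0=f(b)$ for every $a\in F$. So your contradiction hypothesis, that no $F$ half-matches any tail, actually holds for this $f$, and no bookkeeping can derive a contradiction from it. A half-match does exist here, e.g.\ $F=\{\{0\}\}$ with $X=\{\{2k,2k+1\} : k\geq 1\}$, but this $X$ is not a tail.

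The example shows exactly where your iteration breaks. Your greedy witnesses $c_n$ can all be singletons, so pigeonhole gives color $0$. Pairing then gives a sequence $D_1$ all of whose finite unions have color $1$: one color is excluded. To exclude a second color you need new failure witnesses lying inside $\FU(D_1)$. Your hypothesis about tails of $\Pfin(\NN)$ only supplies witnesses such as singletons, which lie outside $\FU(D_1)$.

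\textbf{The missing idea is to re-ask the half-matching question relative to the sequence built at the previous level.} This is Towsner's argument, which the paper cites without proof. Start with $D_0=\{\{0\},\{1\},\ldots\}$. At level $j$, suppose you have an $f$-computable block sequence $D_j$ such that $f$ omits $j$ distinct colors on $\FU(D_j)$. Either some finite $F$ half-matches a tail of $D_j$, in which case you stop and take $X$ to be that tail, or it does not.

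In the second case, your greedy step works inside $\FU(D_j)$. You can $f$-computably find $c_0<c_1<\cdots$ in $\FU(D_j)$, with each $c_n$ a failure witness against $\FU(c_0,\ldots,c_{n-1})$. Then:
\begin{itemize}
\item[(i)] Fix a color $x$ such that $A=\{n : f(c_n)=x\}$ is infinite. This $x$ is necessarily a new color, since $c_n\in\FU(D_j)$, and $A$ is $f$-computable.
\item[(ii)] Let $D_{j+1}$ consist of the unions $c_{n_{2i}}\cup c_{n_{2i+1}}$ of consecutive elements of $A$.
\item[(iii)] Every element of $\FU(D_{j+1})$ has the form $a\cup c_n$ with $n\in A$ and $a\in\FU(c_0,\ldots,c_{n-1})$ non-empty. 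Hence $f$ omits $x$ on $\FU(D_{j+1})$, as well as the previously omitted colors.
\end{itemize}
Since there are only $\ell$ colors, the process must stop at some level $j<\ell$. The resulting $X$ is a tail of some $D_j$, not a tail of $\Pfin(\NN)$. It is $f$-computable because it arises from finitely many $f$-computable steps with finitely many non-uniform choices of colors.
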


\begin{proposition}[Towsner~\cite{towsnerSimpleProofDifficult2012}]\label[proposition]{prop:full-matches-arith}
For every coloring $f : \Pfin(\NN) \to \ell$ with $\ell \geq 1$,
there is a finite set~$F$ and an $f$-arithmetic block sequence $X$
such that $F$ full-matches $X$ for~$f$.
\end{proposition}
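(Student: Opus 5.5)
Following Towsner, I would build a Towsner sequence and stop at a level where the tree yields a full-match. Each level is built by applying \Cref{prop:half-matches-comp} to the current witness coloring. Concretely, set $\ell_0=\ell$, $f_0=f$, and $X_{-1}$ the trivial block sequence of singletons. Given $f_n : \FU(X_{n-1}) \to \ell_n$, transport it to a coloring $g_n$ of $\Pfin(\NN)$ by $g_n(b) = f_n(X_{n-1}[b])$. Apply \Cref{prop:half-matches-comp} to $g_n$ to get a finite $G_n$ and a $g_n$-computable $Y_n$ with $G_n$ half-matching $Y_n$. Then put $F_n = X_{n-1}[G_n]$ and $X_n = X_{n-1}[Y_n]$, using $\FU(X[F]) = X[\FU(F)]$. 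Since $f_{n+1}$ is computable from $f_n$ and $X_n$ (search $F_n$ for a witness), each $X_n$, $f_{n+1}$ is $f$-computable uniformly at finite level $n$. So everything up to any fixed finite level is $f$-arithmetic, in fact $\Delta^0_{k}(f)$ for some $k$ depending on $n$.

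The key step is to show that some finite level $n$ already produces a full-match, by a pigeonhole argument on colors. By \Cref{prop:towsner-tree}, every $b \in \FU(X_n)$ has a branch $E=(a_0,\dots,a_n)$ in the tree with $f(c\cup b)=f(b)$ for all $c \in \FU(E)$. I want some $c\in\FU(E)$ with additionally $f(c)=f(b)$.

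Take $n+1 > \ell$ and consider the colors $f(a_i \cup \dots \cup a_n)$ for $i \le n$. These are $n+1$ values among $\ell$ colors, so two coincide: $f(a_i\cup\dots\cup a_n) = f(a_j \cup \dots\cup a_n)$ with $i<j$. That alone does not give $f(c)=f(b)$. So I would instead refine the induction in \Cref{prop:towsner-tree}: the witness data at level $n$ also record the colors of the tail unions, since $a_{j}$ was chosen with respect to $f_j$. This lets $c=a_i\cup\cdots\cup a_{j-1}$ be treated as an element "$b$" at an earlier level. The intended candidate set is $F = \FU$ of all level-$n$ branches, a finite set with $F < X_n$. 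For each $b$, a branch $E$ whose tail colors repeat should give $c = a_i\cup\dots\cup a_{j-1}$ satisfying $f(c) = f(c\cup b) = f(b)$, once I choose the level at which the pigeonhole is taken relative to $b$'s own witness.

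The main obstacle is exactly this combinatorial step: arranging the pigeonhole so that the repeated color is $f(b)$ itself rather than merely two equal colors among the $a$'s. My first attempt would be to iterate the half-match construction $\ell$ times in nested blocks, each time restricting $X$ via an $\ell$-coloring by $f$, so that the colors occurring as $f(c)$ along branches are forced to cover every $f(b)$. Failing that, I would follow Towsner's argument directly, using an induction on the number of colors that remain "unmatched". Once a finite level $N$ (depending only on $\ell$) is found, $X_N$ is obtained by $N$ nested relativized applications of \Cref{prop:half-matches-comp}, and is therefore $f$-arithmetic.
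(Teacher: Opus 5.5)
\textbf{The main step is missing, and the route you sketch for it fails.} You correctly locate the crux, but your plan for it cannot be completed. No pigeonhole inside a finite level of one Towsner sequence gives a full-match in general, with $F$ the unions of level-$N$ branches and reservoir $X_N$ or a subsequence of it.

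Take $f(a) = \card a \bmod 2$, $F_n = \{\{2n, 2n+1\}\}$ and $X_n = \{\{m\} : m \geq 2n+2\}$. By induction each $f_n(b)$ depends only on the parity of $\card b$, so this is a legitimate Towsner sequence. Its tree has one branch per level, all of whose unions have even size and hence color $0$, while $\{2N+2\} \in X_N$ has color $1$. In fact, if $a < b$ and $f(a) = f(b) = 1$, then $f(a \cup b) = 0$. So no finite $F$ full-matches any reservoir whose finite unions include a set of odd size, and in particular no subsequence of $X_N$. Pigeonholing the blocks of $X_N$ by color therefore does not help.

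The example also shows that your refined induction has nothing to refine. The witness functions $f_{j+1}(b) = (f_j(b), a_j)$ carry information about a branch only through $f(c \cup b)$, never through $f(c)$ for $c$ alone; here they record only the parity of $\card b$. So the repeated color among tail unions of a branch need not be $f(b)$. There is also no level $N$ depending only on $\ell$: in this example no level works at all.

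\textbf{The missing idea.} Pigeonhole on the set of colors a branch realizes, and change the reservoir when this fails. Let $C^*$ be the set of colors of $\FU$ of the current reservoir $R$.
\begin{itemize}
\item[(a)] Suppose that at some level $n$ every level-$n$ branch $E$ satisfies $\{ f(a) : a \in \FU(E) \} = C^*$. Then \Cref{prop:towsner-tree} directly shows that the finite set of all unions of level-$n$ branches full-matches $X_n$.
\item[(b)] Otherwise, the branches $E$ with $\{ f(a) : a \in \FU(E) \} \subsetneq C^*$ form a subtree closed under initial segments with nodes at every level. K\"onig's lemma then gives an infinite path $P$ through the Towsner tree. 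Since an increasing union of proper subsets of a finite set is proper, $f$ omits some color of $C^*$ on $\FU(P)$.
\end{itemize}
In case (b) you restart with $P$ as the reservoir, transporting through $P[\cdot]$, and induct on $\card C^*$; the one-color case is trivial. In the example above, the path $\{0,1\} < \{2,3\} < \cdots$ is already monochromatic.

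This is what the paper means by Towsner ``iteratively taking infinite paths through Towsner trees''. It also changes your complexity count. The bound comes from at most $\ell$ rounds, each using an entire Towsner sequence (computable in $(f \oplus R)''$ by \Cref{prop:towsner-seq-arith}) and a path through a tree computably bounded relative to it. It does not come from finitely many nested applications of \Cref{prop:half-matches-comp}.
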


Using the existence of an $f$-computable half-match as a blackbox, one can easily construct an $f$-arithmetic Towsner sequence by searching $f''$-computably for Turing indices witnessing the existence of an $f$-computable half-match. Indeed, the statement \qt{$e$ codes an $f$-computable half-match for~$f$} is $\Pi^0_2(f)$.

\begin{proposition}\label[proposition]{prop:towsner-seq-arith}
For every coloring $f : \Pfin(\NN) \to \ell$ with $\ell \geq 1$,
there is an $f''$-computable Towsner sequence.
\end{proposition}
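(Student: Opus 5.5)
We build the Towsner sequence $\Sc = \{(\ell_n, f_n, F_n, X_n)\}_{n}$ by recursion on~$n$, $f''$-effectively, invoking \Cref{prop:half-matches-comp} at each stage as a black box. To make the recursion uniform, the sequence we output at stage~$n$ is the data $(\ell_n, F_n, e_n)$, where $e_n$ is a Turing index such that $\Phi_{e_n}^{f}$ computes $X_n$. The function $f_n$ is then not stored separately: it is determined by $f$ and the earlier data. The key invariant is that $f_n$ is $f$-computable uniformly in $(F_i, e_i)_{i<n}$.

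Indeed, to compute $f_{n+1}(b)$ for $b \in \FU(X_n)$, we compute $f_n(b)$ by induction. We then search the finite set $F_n$, in the fixed order, for the least $a$ with $f_n(a \cup b) = f_n(b)$. Note that $a \cup b \in \FU(X_{n-1})$, since $F_n \cup X_n \subseteq \FU(X_{n-1})$, so $f_n(a\cup b)$ is defined. This search terminates because $F_n$ half-matches $X_n$ for~$f_n$. Deciding whether $b \in \FU(X_n)$ is $X_n$-computable, hence $f$-computable, since $X_n$ is a block sequence.

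\textbf{Stage $n$.} Suppose we have $(F_i, e_i)_{i<n}$, with $f_n$ an $f$-computable coloring of $\FU(X_{n-1})$ (for $n=0$, $f_0 = f$ on $\Pfin(\NN)$). Transport it to a coloring of $\Pfin(\NN)$ by setting $g_n(c) = f_n(X_{n-1}[c])$. This $g_n$ is $f$-computable uniformly, and takes values in $\ell_n$. Apply \Cref{prop:half-matches-comp} to $g_n$. It gives a finite $G$ and a $g_n$-computable, hence $f$-computable, block sequence $Y$ such that $G$ half-matches $Y$ for $g_n$. Set
\[
F_n = X_{n-1}[G], \qquad X_n = X_{n-1}[Y].
\]
Since $\FU(X_{n-1}[Y]) = X_{n-1}[\FU(Y)]$, the set $F_n$ half-matches $X_n$ for~$f_n$. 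Moreover $F_n < X_n$, and $F_n \cup X_n \subseteq \FU(X_{n-1})$, as required. Finally set $\ell_{n+1} = \ell_n \times F_n$.

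\textbf{Effectivity.} We cannot compute $G$ and $Y$ directly; we only know they exist. So we search $f''$-computably for a pair $(G, e)$ such that $\Phi^f_e$ is total and enumerates an infinite block sequence $Y$ with $G < Y$, and $G$ half-matches $Y$ for $g_n$. We check that this condition is $\Pi^0_2(f)$:
\begin{itemize}
\item totality of $\Phi^f_e$ is $\Pi^0_2(f)$;
\item being an infinite block sequence above $G$ is $\Pi^0_1(f)$ given totality;
\item half-matching reads $\forall b\,[b \in \FU(Y) \rightarrow \exists a \in G\; g_n(a\cup b) = g_n(b)]$. The inner existential is bounded and $g_n$ is $f$-computable, so this is $\Pi^0_1$ in $f$ given totality of $\Phi^f_e$.
\end{itemize}
Hence $f''$ decides each candidate, and by \Cref{prop:half-matches-comp} the search halts. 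The whole recursion is uniform, so $n \mapsto (\ell_n, F_n, e_n)$ is $f''$-computable. Each $X_n$ and $f_n$ is then computable from $f$ and this data, which gives an $f''$-computable Towsner sequence.

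\textbf{Main obstacle.} The only real subtlety is the uniformity of $f_n$ and the domain bookkeeping. The witness function $f_{n+1}$ is only defined on $\FU(X_n)$, so \Cref{prop:half-matches-comp} has to be applied to the pulled-back coloring $g_n$ on $\Pfin(\NN)$ and the result pushed forward through $X_{n-1}[\cdot]$. We also have to verify that the $\Pi^0_2$ complexity does not grow with~$n$. It does not, because each $f_n$ is $f$-computable rather than merely $f''$-computable: its definition uses only the finite sets $F_i$ and $f$-computable $X_i$, not the search itself.
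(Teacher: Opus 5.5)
Your proof is correct and follows the paper's argument. You recursively apply \Cref{prop:half-matches-comp} as a black box, using that ``$(G,e)$ codes an $f$-computable half-match'' is $\Pi^0_2(f)$ so that $f''$ can find it, while each witness function $f_n$ stays $f$-computable uniformly in the data already found; your explicit transport $g_n(c) = f_n(X_{n-1}[c])$ to $\Pfin(\NN)$ and back via $\FU(X_{n-1}[Y]) = X_{n-1}[\FU(Y)]$ just fills in a step the paper leaves implicit.
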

\begin{proof}
A \emph{code} for an $f$-computable set~$X$ is a Turing index~$e$ such that $\Phi_e^f = X$.

Let $X_{-1} = \Pfin(\NN)$, $\ell_0 = \ell$, and $f_0 = f$.
Having defined an $f$-computable block sequence $X_{n-1}$, some $\ell_n \in \NN$ and an $f$-computable coloring $f_n : \FU(X_{n-1})\allowbreak \to \ell_n$, let $F_n \subseteq \FU(X_{n-1})$ be a finite set and $X_n \subseteq \FU(X_{n-1})$ be an infinite $f$-computable block sequence such that $F_n$ half-matches $X_n$ for~$f_n$. Such a pair exists by \Cref{prop:half-matches-comp}. Moreover, $F_n$ and a code for $X_n$ can be $f''$-computably found uniformly in a code for~$X_{n-1}$, $\ell_n$ and a code for $f_n$. Last, let $\ell_{n+1} = \ell_n \times F_n$ and $f_{n+1} : \FU(X_n) \to \ell_{n+1}$ be the witness function associated to $(\ell_n, f_n, F_n, X_n)$. Note that $f_{n+1}$ is $f$-computable, and a code for~$f_{n+1}$ can be found $f''$-computably in $(\ell_n, f_n, F_n, X_n)$.
\end{proof}

\subsection{Notion of forcing}\label[section]{sec:notion-of-forcing}

We are now ready to define our notion of forcing. It is a tree-like forcing, in the sense that every sufficiently generic filter induces an infinite, finitely branching tree of block sequences, and any infinite path through this tree is the desired solution.

\begin{definition}
A \emph{$\PP$-branch} is a pair $(F, X)$ such that
\begin{itemize}
    \item[(1)] $F$ is a non-empty finite block sequence; $X$ is an infinite block sequence ;
    \item[(2)] $F < X$ and $\FU(F)$ is $f$-monochromatic.
\end{itemize}
\end{definition}

One can think of a $\PP$-branch as a Mathias condition with some $f$-monochroma\-ticity constraints. 
Given a $\PP$-branch $(F, X)$, its \emph{denotation} $[F, X]$ is the class of (finite or infinite) block sequences
$$
[F, X] = \{ Z \subseteq \FU(F \cup X) : F \preceq Z \wedge \FU(Z) \text{ is $f$-monochromatic } \}
$$
Note that there are some $\PP$-branches such that $[F, X] = \{F\}$. Also note that by our non-emptiness assumption on~$F$, all the elements of $[F,X]$ have the same color of $f$-monochromaticity, namely, the color of $\FU(F)$.

\begin{definition}
A $\PP$-branch $(E, Y)$ \emph{extends} $(F, X)$ (written $(E, Y) \leq (F, X)$) if $F \preceq E$, $Y \subseteq \FU(X)$, and $E \setminus F \subseteq \FU(X)$.
\end{definition}

We now define the notion of $\PP$-condition, which contains sufficiently many $\PP$-branches so that at least one of them has an $\FUT$-solution in its denotation.

\begin{definition}
A \emph{$\PP$-precondition} is a pair $(\I, X)$ such that
\begin{itemize}
    \item[(1)] $\I$ is a finite set of non-empty finite block sequences;
    \item[(2)] for every~$F \in \I$, $(F, X)$ is a $\PP$-branch
    \item[(3)] $X$ is of arithmetic degree;
\end{itemize}
A $\PP$-precondition $(\I, X)$ is \emph{$f$-matching} if 
\begin{quote}
for every $b \in \FU(X)$, there is some~$F \in \I$ such that for each~$a \in \FU(F)$, $f(a) = f(a \cup b) = f(b)$.
\end{quote}
Combined with (2), this implies that $\FU(F \cup \{b\})$ is $f$-monochromatic.\\
A \emph{$\PP$-condition} is an $f$-matching $\PP$-precondition.
\end{definition}

The \emph{denotation} of a $\PP$-precondition $(\I, X)$, is the class $[\I, X] = \bigcup_{F \in \I} [F, X]$.


\begin{remark}
By \Cref{prop:full-matches-arith}, there is a finite block sequence~$F$ and an arithmetic block sequence $X$
such that $F$ full-matches $X$ for~$f$.
Then, letting $\I = \{ \{a\} : a \in F \}$, $(\I, X)$ is a $\PP$-condition.
\end{remark}

\begin{definition}
A $\PP$-precondition $(\J, Y)$ \emph{extends} $(\I, X)$ (written $(\J, Y) \leq (\I, X)$) if for every~$E \in \J$, there is some~$F \in \I$ such that $(E, Y) \leq (F, X)$. 
\end{definition}

We shall see that the $f$-matching property is sufficient to ensure the existence of an $\FUT$-solution in $[\I, X]$ for any $\PP$-condition $(\I, X)$.
The \emph{canonical coloring} associated to a $\PP$-condition $(\I, X)$ is the coloring $\hat f : \Pfin(\NN) \to \I$ defined by
\begin{align}\label{eq:canonical-for-precondition}
\hat f :\ & b \mapsto F \text{ such that } \notag \\
 & \text{ $\FU(F \cup \{X[b]\})$ is $f$-monochromatic}
\end{align}
If there is more than one~$F$ satisfying (\ref{eq:canonical-for-precondition}), pick the least such one for some fixed order.
As we did for the notion of full-match, the following lemma shows the existence of an $\FUT$-solution in $[\I, X]$, using the fact that $\FUT$ classically holds. Because of the assumption, it is only used as an intuition:

\begin{lemma}\label[lemma]{lem:fut-solution}
For every $\PP$-condition $(\I, X)$, there is an infinite block sequence $Z \in [\I, X]$.
\end{lemma}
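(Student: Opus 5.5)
The plan is to copy the argument of the earlier lemma on full-matches, with the canonical coloring $\hat f$ of the $\PP$-condition $(\I, X)$ playing the role of $\bar f$.

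First, $\hat f$ is a well-defined total coloring $\Pfin(\NN) \to \I$. For any $b \in \Pfin(\NN)$, the set $X[b]$ lies in $\FU(X)$. Since $(\I, X)$ is $f$-matching, there is some $F \in \I$ such that $f(a) = f(a \cup X[b]) = f(X[b])$ for every $a \in \FU(F)$. Because $(F, X)$ is a $\PP$-branch, $\FU(F)$ is already $f$-monochromatic, so $\FU(F \cup \{X[b]\})$ is $f$-monochromatic. Hence the set of candidates in (\ref{eq:canonical-for-precondition}) is non-empty, and $\hat f$ is a coloring with finitely many colors, namely $|\I|$ of them.

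Next, we apply $\FUT$ (\Cref{thm:fut}) to $\hat f$. This gives an infinite block sequence $Y \subseteq \Pfin(\NN)$ such that $\FU(Y)$ is $\hat f$-monochromatic with some color $F \in \I$. We set $Z = F \cup X[Y]$.

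It remains to check that $Z \in [F, X] \subseteq [\I, X]$. Since $X$ is a block sequence and $Y$ is a block sequence, $X[Y]$ is an infinite block sequence contained in $\FU(X)$. Since $F < X$, we also have $F < X[Y]$, so $Z$ is a block sequence with $F \preceq Z$ and $Z \subseteq \FU(F \cup X)$. For monochromaticity, take any $c \in \FU(Z)$. Then $c$ has one of three forms:
\begin{itemize}
\item $c = a$ for some $a \in \FU(F)$;
\item $c = X[b]$ for some $b \in \FU(Y)$, using $\FU(X[Y]) = X[\FU(Y)]$;
\item $c = a \cup X[b]$ with $a \in \FU(F)$ and $b \in \FU(Y)$.
\end{itemize}
In each case, $\hat f(b) = F$ means $\FU(F \cup \{X[b]\})$ is $f$-monochromatic. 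Since this family always contains $\FU(F)$, its color is the fixed color of $\FU(F)$, independently of $b$. So every such $c$ receives that same color, $\FU(Z)$ is $f$-monochromatic, and $Z$ is the required infinite block sequence in $[\I, X]$.

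There is no real obstacle. The only points needing care are checking that $\hat f$ is total, which uses the $f$-matching property together with the monochromaticity of $\FU(F)$, and noting that the colors of the different sets $\FU(F \cup \{X[b]\})$ agree because they all contain $\FU(F)$.
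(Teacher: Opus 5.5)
Your proof is correct and is essentially the paper's own argument. You apply $\FUT$ to the canonical coloring $\hat f$, set $Z = F \cup X[Y]$ for the resulting color $F \in \I$, and note that every $c \in \FU(Z)$ lies in some $\FU(F \cup \{X[b]\})$ with $b \in \FU(Y)$, whose color must be that of $\FU(F)$; your checks that $\hat f$ is total (via $f$-matching) and that $Z$ is a block sequence with $F \preceq Z$ fill in details the paper leaves implicit.
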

\begin{proof}
Let $\hat f : \Pfin(\NN) \to \I$ be the canonical coloring associated to~$(\I, X)$. By $\FUT$, there is an infinite block sequence $Y \subseteq \Pfin(\NN)$ such that 
\begin{align}\label{eq:fut-solution1}
    \text{$\FU(Y)$ is $\hat f$-monochromatic for some color~$F \in \I$}
\end{align}
Since $(F, X)$ is a $\PP$-branch $\FU(F)$ is $f$-monochromatic for some color~$\ell^*$. Unfolding the definition of~$\hat f$, combined with (\ref{eq:fut-solution1}), for every~$b \in \FU(Y)$,
\begin{align}\label{eq:fut-solution2}
    \FU(F \cup \{X[b]\}) \text{ is $f$-monochromatic for color } \ell^*
\end{align}
Let $Z = F \cup X[Y]$.
We claim that $\FU(Z)$ is $f$-monochromatic for color~$\ell^*$.
Indeed, fix any $c \in \FU(Z)$. Let $b \in \FU(Y)$ be such that $c \in \FU(F \cup  \{ X[b] \})$. By (\ref{eq:fut-solution2}), $f(c) = \ell^*$. Since $F \preceq Z$ and $\FU(Z)$ is $f$-monochromatic, $Z \in [F, X] \subseteq [\I, X]$.
This completes the proof of \Cref{lem:fut-solution}.
\end{proof}

\subsection{First-jump control}

This section is at the heart of the notion of forcing, by designing a forcing question with the appropriate definability properties. We first define a strong forcing relation for $\Sigma^0_1$ and $\Pi^0_1$ formulas, which holds for \emph{every} filter containing the condition instead of the sufficiently generic filters.

\begin{definition}[Forcing relation]\label[definition]{def:fut-forcing}
Let $(F,X)$ be a $\PP$-branch and $\varphi(G):= \exists n\psi(G\uh_n)$ be a $\Sigma^0_1$-formula. Let
\begin{itemize}
    \item[(1)] $(F,X)\Vdash \varphi(G)$
    iff $\psi(F\uh_n)$ holds for some $n$;
    \item[(2)] $(F,X)\Vdash \neg\varphi(G)$ iff
    for every $Z \in [F,X]$, every $n\in\NN$, $\neg\psi(Z\uh_n)$ holds.
\end{itemize}
We also say $(F, X)$ \emph{forces} $\varphi(G)$ for $(F, X) \Vdash \varphi(G)$.
\end{definition}

It is not true in general that given a $\PP$-branch $(F, X)$ and a $\Sigma^0_1$-formula $\varphi(G)$, the set of $\PP$-branches forcing either $\varphi(G)$ or $\neg \varphi(G)$ is dense below $(F, X)$. We shall however see that if this density property does not hold, then the $\PP$-branch can be removed from the $\PP$-condition without affecting the $f$-matching property.
\smallskip

Given an arbitrary notion of forcing $(\PP, \leq)$, a \emph{forcing question} is a relation $p \qvdash \varphi(G)$ between a condition $p \in \PP$ and a formula $\varphi(G)$ such that if $p \qvdash \varphi(G)$ holds, then there is an extension $q \leq p$ forcing $\varphi(G)$, and if $p \qvdash \varphi(G)$ does not hold, then there is an extension $q \leq p$ forcing $\neg \varphi(G)$. Contrary to the notion of forcing relation, there exist many potential implementations of forcing questions, with various additional definability and combinatorial properties. The computability-theoretic weakness of the generic sets are closely related to the existence of a forcing question whose definitional complexity is similar to the complexity of the formula $\varphi(G)$. Since we work with a tree-like notion of forcing, the concept of forcing question has to be slightly adapted, but the idea remains the same.

For every $\PP$-condition $(\I, X)$, let $\hat f : \Pfin(\NN) \to \I$ be the canonical coloring associated. We write $\Sc(\I, X)$ for some fixed arithmetic Towsner sequence for~$\hat f$. Accordingly, we write $\Tc(\I, X)$ for its corresponding Towsner tree.

\begin{definition}[Forcing question]\label[definition]{def:fut-forcing-question}
Let $(\I,X)$ be a $\PP$-precondition and $F\in \I$.
We write $(\I,X)\qvdash_F \varphi(G)$
iff there exists a level~$n \in \NN$ such that for every finite block sequence $E$ at level~$n$ in~$\Tc(\I, X)$,
there exists some
block sequence $\hat E\subseteq \FU(E)$
such that $\FU(F \cup X[\hat E])$ is $f$-monochromatic and such that
$\varphi(F \cup X[\hat E])$ holds.
\end{definition}

Note that given a $\Sigma^0_1$-formula $\varphi(G, x)$, the set 
$$\{ n \in \NN : (\I,X)\qvdash_F \varphi(G), n \}$$ 
is 
$\Sigma^0_1(\Tc(\I, X))$, and therefore arithmetic since
$\Sc(\I, X)$ is arithmetic. 
This will be used in an essential way to prove the diagonalization lemma (\Cref{lem:diagonalization-requirement}).

Because of the tree-like nature of the notion of forcing, the answer to the forcing question depends on which $\PP$-branch is considered. In order to obtain an extension which decides the formula on each of the $\PP$-branches, one must be careful not to split the other $\PP$-branches of the $\PP$-condition while taking the extension, so that progress is made. \Cref{fut-lemextPi1} and \Cref{fut-lemextSigma1} show that \Cref{def:fut-forcing-question} satisfies the abstract properties of a forcing question.

\begin{lemma}[$\Pi_1^0$-Extension]\label[lemma]{fut-lemextPi1}
Let $(\I,X)$ be a $\PP$-condition
and $F\in \I$.
If $(\I,X)\nqvdash_F \varphi(G)$,
then there exists a $\PP$-condition
$(\I,\hat X)\leq (\I,X)$ such that
$(F,\h X)\Vdash\neg\varphi(G)$.
\end{lemma}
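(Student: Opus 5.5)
The plan is to unfold the failure of the forcing question into an infinite path through the Towsner tree $\Tc(\I, X)$, and then to thin $X$ along that path. Say that a finite block sequence $E$ in $\Tc(\I,X)$ is \emph{bad} if there is no block sequence $\hat E \subseteq \FU(E)$ such that $\FU(F \cup X[\hat E])$ is $f$-monochromatic and $\varphi(F \cup X[\hat E])$ holds. The hypothesis $(\I,X)\nqvdash_F \varphi(G)$ says exactly that every level $n$ of $\Tc(\I,X)$ contains a bad node.

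First, I will check that the bad nodes form a subtree. If $E$ is bad and $E' \preceq E$, then $\FU(E') \subseteq \FU(E)$, so any witness $\hat E$ for $E'$ would also be a witness for $E$. Hence $E'$ is bad as well. This subtree has nodes at every level and is finitely branching, since each $F_i$ is finite. By K\"onig's lemma it therefore has an infinite path $P = \{a_0 < a_1 < \cdots\}$ with $a_i \in F_i$.

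I want $P$ to be arithmetic, so that the new reservoir has arithmetic degree. Given a node $E$, the set of candidates $\hat E \subseteq \FU(E)$ is finite and monochromaticity is a finite check. Also $\varphi$ is $\Sigma^0_1$. So badness is $\Pi^0_1$ relative to $\Tc(\I,X)$ and $X$, which are both arithmetic. The leftmost infinite path of a finitely branching tree is then computable in a finite jump of these data, and is thus arithmetic. Set $\hat X = X[P]$, which is arithmetic and an infinite block sequence.

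Next, I will verify that $(\I,\hat X)$ is a $\PP$-condition extending $(\I,X)$.
\begin{itemize}
\item Every element of $\hat X$ lies in $\FU(X)$, so $\hat X \subseteq \FU(X)$ and $F' < \hat X$ for each $F' \in \I$. This gives the $\PP$-branches and the extension relation, with the same $\I$ and hence no splitting.
\item For $f$-matching, note that $\FU(\hat X) = X[\FU(P)] \subseteq \FU(X)$. The matching property of $(\I,X)$ is universal over $b \in \FU(X)$, so it is inherited by $(\I,\hat X)$.
\end{itemize}

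Finally, I will show $(F,\hat X)\Vdash \neg\varphi(G)$, where $\varphi(G) = \exists n\,\psi(G\uh_n)$. Take $Z \in [F,\hat X]$ and $n$, and suppose toward a contradiction that $\psi(Z\uh_n)$ holds. We may assume $Z\uh_n \succeq F$, enlarging $n$ if needed, which is harmless as $\psi$ is a $\Sigma^0_1$ matrix about a finite prefix. The elements of $Z\uh_n$ beyond $F$ lie in $\FU(X[P])$, so $Z\uh_n = F \cup X[\hat E]$ for a finite block sequence $\hat E \subseteq \FU(P)$. Since $\hat E$ is finite, $\hat E \subseteq \FU(E)$ for some sufficiently long initial segment $E$ of $P$. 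Moreover, $\FU(F\cup X[\hat E]) \subseteq \FU(Z)$ is $f$-monochromatic and $\varphi(F\cup X[\hat E])$ holds. This contradicts the badness of $E$.

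I expect the main obstacle to be this last bookkeeping step. One must match prefixes of $Z$ exactly with sets of the form $F\cup X[\hat E]$, and this relies on the block order: elements of $\FU(\hat X)$ come after $F$ and are in increasing order. Along with it, the arithmeticity of the path must be handled carefully so that condition (3) of a $\PP$-precondition is preserved.
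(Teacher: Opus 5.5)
Your proposal is correct and follows essentially the paper's argument: your subtree of bad nodes is exactly the tree whose infinite paths form the paper's $\Pi^0_1$ class $Q$, and you take $\hat X = X[P]$ for a path $P$ through it, just as the paper takes $\hat X = X[Y]$. The only difference is how you get an arithmetic path. You use K\"onig's lemma and the leftmost path, computable in a jump of $f \oplus X \oplus \Sc(\I,X)$. The paper instead invokes compactness and Kreisel's $\Delta^0_2$ basis theorem.
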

\begin{proof}
By definition of $(\I,X)\nqvdash_F\varphi(G)$, for every
level~$n \in \NN$, there is a finite block sequence $E$ at level~$n$ in~$\Tc(\I, X)$ such that for every 
block sequence $\hat E\subseteq \FU(E)$
for which $\FU(F \cup X[\hat E])$ is $f$-monochromatic, $\varphi(F \cup X[\hat E])$ does not hold.
Let $Q$ be the $\Pi^{0,X}_1$ class of all~$Y \in [\Tc(\I, X)]$ such that
\begin{align}\label{fut-lemext-eq2}
& \text{for every  
finite block sequence $\hat E\subseteq \FU(Y)$ such that}\notag \\ 
& \text{$\FU(F \cup X[\hat E])$ is $f$-monochromatic,} \notag\\
& \text{we have $\neg\varphi(F \cup X[\hat E])$}.
\end{align}
By compactness, the class~$Q$ is non-empty.
By construction of the Towsner tree, every $Y\in Q$ is an infinite block sequence.
By Kreisel's $\Delta^0_2$ basis theorem,
there is some $Y \in Q$ which is $\Delta^0_2(X)$, hence of arithmetic degree.
Let $\hat X = X[Y]$.
Clearly, $$(\I,\hat X)\leq (\I,X)$$ is a $\PP$-precondition. Furthermore, it is $f$-matching since only the reservoir is restricted, so $(\I,\hat X)$ is a $\PP$-condition.
It remains to show that $(F,\hat X)\Vdash\neg\varphi(G)$,
which follows directly from (\ref{fut-lemext-eq2}).
Thus, we are done.
\end{proof}



\begin{lemma}[$\Sigma_1^0$-Extension]\label[lemma]{fut-lemextSigma1}
Let $(\I,X)$ be a $\PP$-condition and $F \in \I$.
If $(\I,X)\qvdash_F \varphi(G)$,
then there exists a $\PP$-condition $(\h \I,\h X)\leq (\I,X)$
such that for every~$\h F \in \h \I \setminus (\I \setminus \{F\})$, $\h F \succeq F$ and 
$(\h F,\h X)\Vdash\varphi(G)$.
\end{lemma}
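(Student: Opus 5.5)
The plan is to use the level~$n$ witnessing $(\I,X)\qvdash_F \varphi(G)$. Each level-$n$ node of the Towsner tree yields one new branch extending~$F$. The reservoir is shrunk to $X[X_n]$, where $X_n$ is the $n$-th reservoir of the fixed arithmetic Towsner sequence $\Sc(\I,X)$ for the canonical coloring~$\hat f$. The $f$-matching property of the new condition will then follow from \Cref{prop:towsner-tree} applied to~$\hat f$.

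\textbf{Construction.} Write $\Sc(\I,X) = \{(\ell_i, f_i, F_i, X_i)\}_{i}$, so $f_0 = \hat f$, and fix the level~$n$ given by the forcing question. The level-$n$ nodes of $\Tc(\I,X)$ form the finite set $\prod_{i\le n}F_i$. For each such node $E$, choose a block sequence $\hat E_E \subseteq \FU(E)$ such that $\FU(F\cup X[\hat E_E])$ is $f$-monochromatic and $\varphi(F\cup X[\hat E_E])$ holds. Set $\hat F_E = F\cup X[\hat E_E]$, let $\hat\I = (\I\setminus\{F\}) \cup \{\hat F_E : E \text{ at level } n\}$, and let $\hat X = X[X_n]$.

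\textbf{Routine checks.} These are done first.
\begin{itemize}
\item $\hat X$ is arithmetic, since $X$ and $X_n$ are.
\item Since $F_i < X_n$ for $i \le n$, we have $\hat E_E < X_n$, hence $\hat F_E < \hat X$; also $F < \hat X$ because $F<X$.
\item Each $(\hat F_E,\hat X)$ is a $\PP$-branch and extends $(F,X)$, since $\hat F_E\setminus F\subseteq \FU(X)$ and $\hat X\subseteq\FU(X)$. The same holds trivially for the untouched branches $G\in \I\setminus\{F\}$.
\item $(\hat F_E,\hat X)\Vdash\varphi(G)$ in the sense of \Cref{def:fut-forcing}. The $\Sigma^0_1$ formula $\varphi$ holds of the finite sequence $\hat F_E$, so its matrix $\psi$ holds of some initial segment $\hat F_E\uh_m$.
\end{itemize}

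\textbf{Main step: $f$-matching.} Every element of $\FU(\hat X)$ has the form $X[b]$ with $b\in\FU(X_n)$. Fix such a $b$ and let $G = \hat f(b)$.
\begin{itemize}
\item If $G\neq F$, then $G\in\hat\I$. By definition of $\hat f$, $\FU(G\cup\{X[b]\})$ is $f$-monochromatic, and this is exactly the matching requirement.
\item If $G = F$, apply \Cref{prop:towsner-tree} to the Towsner sequence for $\hat f$. It gives a level-$n$ node $E$ with $\hat f(a\cup b) = \hat f(b) = F$ for all $a\in\FU(E)$. Hence $\FU(F\cup\{X[a\cup b]\})$ is $f$-monochromatic for every $a \in \FU(E)\cup\{\emptyset\}$, and its color is the color $\ell^*$ of $\FU(F)$, since $F$ is non-empty.
\end{itemize}
In the second case we check that $\FU(\hat F_E\cup\{X[b]\})$ is monochromatic. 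An element has the form $c\cup X[d]$ or $c\cup X[d\cup b]$ with $c\in\FU(F)\cup\{\emptyset\}$, $d\in\FU(\hat E_E)\cup\{\emptyset\}$, and not everything empty. Since $\hat E_E\subseteq\FU(E)$, we get $d\in\FU(E)\cup\{\emptyset\}$.
\begin{itemize}
\item Elements not involving $b$ lie in $\FU(F\cup X[\hat E_E])$. This set is monochromatic and contains $\FU(F)$, so its color is $\ell^*$.
\item Elements $c\cup X[d\cup b]$ have color $\ell^*$ by the previous paragraph, using that $X[\cdot]$ commutes with unions.
\end{itemize}
So the matching branch is $\hat F_E$.

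I expect this second case to be the main obstacle. It is where the choice of $\hat E_E\subseteq \FU(E)$, rather than an arbitrary extension, is essential: the same node $E$ from \Cref{prop:towsner-tree} must simultaneously control $b$ and every $d\in\FU(\hat E_E)$. Finally, every $\hat F\in\hat\I\setminus(\I\setminus\{F\})$ is some $\hat F_E\succeq F$, so $(\hat\I,\hat X)$ is the required $\PP$-condition.
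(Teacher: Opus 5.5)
Your proposal is correct and matches the paper's proof step for step. It uses the same witnessing level $n$, the same reservoir $X[X_n]$, and the same case split on $\hat f(b)$, with the Towsner-tree property supplying a level-$n$ node $E$ whose witness $\hat E \subseteq \FU(E)$ yields the matching branch. The only difference is cosmetic: the paper adds $F \cup X[\hat E]$ for every witness $\hat E$ in the finite set $W$, whereas you fix one witness per level-$n$ node, which suffices because the matching argument only uses the witness attached to the node produced by the Towsner-tree property.
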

\begin{proof}
Unfolding the definition of
$(\I,X)\qvdash_F\varphi(G)$, there is a level $n \in \NN$
such that for every $E$ in level $n$ of  $\Tc(\I, X)$,
\begin{align}\label{lemextSigma1-eq16}
& \text{there exists some  
block sequence $\hat E\subseteq \FU(E)$ 
such that}\notag \\ 
& \text{$\FU(F \cup X[\hat E])$ is $f$-monochromatic,} \notag\\
& \text{and $\varphi(F \cup X[\hat E])$ holds}.
\end{align}
Let $W$ be the set of all $\h E$ satisfying (\ref{lemextSigma1-eq16}).
Say that 
$$
\Sc(\I, X) = \{ (\ell_0, f_0, F_0, X_0), (\ell_1, f_1, F_1, X_1), \cdots \}
$$
where $\ell_0 = \I$ and $f_0 : \Pfin(\NN) \to \I$ is the canonical coloring~$\h f$ associated to $(\I, X)$.
Let
\begin{align}\label{lemextSigma1-eq17}
\h \I & := (\I \setminus \{F\}) \cup \left\{ F \cup X[\h E] : \h E \in W \right\} \\
\h X & := X[X_n]\nonumber
\end{align}
We show that $(\h \I,\h X)$ is the desired extension.
It is obviously a $\PP$-precondition.
By definition of $\h I$ (see also (\ref{lemextSigma1-eq16})),
for every $\h F\in \h \I \setminus (\I \setminus \{F\})$, $\h F \succeq F$ and $\varphi(\h F)$ holds, so $(\h F,\h X)\Vdash\varphi(G)$.
Thus, it remains to show that
\begin{claim}\label[claim]{lemextSigma1-claim4}
$(\h \I,\h X)$ is $f$-matching.
\end{claim}
\begin{proof}
Let 
$b\in \FU(\h X)$
it suffices to find some $\h F \in \h \I$ such that 
\begin{align}\label{lemextSigma1-eq20}
\FU(\h F \cup \{b\})\text{ is $f$-monochromatic.}
\end{align}
Let 
$\hat b\in \FU(X_n)$
be such that
$$b = X[\h b].$$
If $\h f(\h b) \neq F$, then $\h f(\h b) \in (\I \setminus \{F\}) \subseteq \h \I$, and by (\ref{eq:canonical-for-precondition}),
\begin{align}\label{lemextSigma1-eq20}
\FU(\h f(\h b) \cup \{X[\h b]\})\text{ is $f$-monochromatic.}
\end{align}
In this case, we are done by letting $\h F = \h f(\h b)$. So suppose
\begin{align}\label{newlemextSigma1-good-color}
\h f(\h b) = F
\end{align}
By definition of a $\PP$-branch, $F$ is non-empty and $\FU(F)$ is $f$-monochromatic for some color~$\ell^* < \ell$.
By the fundamental property of the Towsner tree $\Tc(\I, X)$ (\Cref{prop:towsner-tree}) applied to the canonical coloring~$\h f$ and the set $\hat b$, there is some finite block sequence~$E$ at level~$n$ in $\Tc(\I, X)$ such that
\begin{align}\label{newlemextSigma1-eq2}
\text{for every $a \in \FU(E)$, $\h f(a \cup \h b) = \h f(\h b) = F$.} 
\end{align}
By (\ref{eq:canonical-for-precondition}), for every $a \in \FU(E)$
\begin{align}\label{newlemextSigma1-eq3}
\text{$\FU(F \cup \{X[a \cup \h b]\})$ and $\FU(F \cup \{X[\h b]\})$} \notag\\
\text{are $f$-monochromatic for color~$\ell^*$}
\end{align}
Since $b = X[\h b]$ and $X[a \cup \h b] = X[a] \cup X[\h b]$, for every $a \in \FU(E)$
\begin{align}\label{newlemextSigma1-eq5}
\text{$\FU(F \cup \{X[a] \cup b\})$ and $\FU(F \cup \{b\})$} \notag\\
\text{are $f$-monochromatic for color~$\ell^*$}
\end{align}
By (\ref{lemextSigma1-eq16}), there is some $\h E \in W$ such that $\h E \subseteq \FU(E)$. In particular, 
\begin{align}\label{newlemextSigma1-eq4}
\text{$\FU(F \cup X[\h E])$ is $f$-monochromatic for color~$\ell^*$}
\end{align}
For any element $c \in \FU(F \cup X[\h E] \cup \{b\})$, one of the following holds:
\begin{itemize}
    \item[(i)] $c \in \FU(F \cup X[\h E])$. By (\ref{newlemextSigma1-eq4}), $f(c) = \ell^*$;
    \item[(ii)] $c \in \FU(F \cup \{X[a] \cup b\})$ with $a \in \FU(\h E)$. By (\ref{newlemextSigma1-eq5}), $f(c) = \ell^*$;
    \item[(iii)] $c \in \FU(F \cup \{b\})$. Again, by (\ref{newlemextSigma1-eq5}), $f(c) = \ell^*$;
\end{itemize}
In all cases we are done by letting $\hat F = F \cup X[\h E] \in \h \I$.
This completes the proof of \Cref{lemextSigma1-claim4}.
\end{proof}
This completes the proof of \Cref{fut-lemextSigma1}.
\end{proof}

\subsection{Positive requirements}

As mentioned, some $\PP$-branches do not contain infinite block sequences in their denotation.
We are going to see that any such $\PP$-branch can be removed from a $\PP$-condition, and therefore that for every sufficiently generic decreasing sequence of $\PP$-conditions, the intersection of the denotations contains only infinite block sequences.

\begin{lemma}\label[lemma]{lem:positive-requirement}
Let $(\I, X)$ be a $\PP$-condition and $F \in \I$. There is a $\PP$-condition $(\h \I, \h X) \leq (\I, X)$ such that for every~$\h F \in \h \I \setminus (\I \setminus \{F\})$, $\h F \succeq F$ and $\card \h F > \card F$.
\end{lemma}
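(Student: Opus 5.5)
The plan is to reduce \Cref{lem:positive-requirement} to \Cref{fut-lemextSigma1}, taking $\varphi(G)$ to be the $\Sigma^0_1$ formula expressing that $G$ has more than $\card F$ elements. If a condition forces $\varphi(G)$ on a branch $(\h F, \h X)$ with $\h F \succeq F$, then by \Cref{def:fut-forcing}(1) we get $\card \h F > \card F$. So it suffices to show that $(\I, X) \qvdash_F \varphi(G)$ holds. \Cref{fut-lemextSigma1} then supplies $(\h \I, \h X) \leq (\I, X)$ in which every new branch extends $F$ and forces $\varphi(G)$, which is exactly the conclusion.

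The key step is therefore to verify $(\I, X) \qvdash_F \varphi(G)$ directly from \Cref{def:fut-forcing-question}, and I would take level $n = 0$. Fix $E = \{a_0\}$ at level $0$ of $\Tc(\I, X)$, where $a_0 \in F_0$ and $F_0$ half-matches $X_0$ for the canonical coloring $\h f$. I need some block sequence $\h E \subseteq \FU(E)$, which here can only be $\h E = \{a_0\}$, such that $\FU(F \cup \{X[a_0]\})$ is $f$-monochromatic. Once that holds, $\varphi(F \cup \{X[a_0]\})$ is immediate, since this sequence has $\card F + 1$ elements.

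The main obstacle is that monochromaticity of $\FU(F \cup \{X[a_0]\})$ is not automatic: $\h f(a_0)$ may be a branch other than $F$. To handle this, I would first restrict to $F$-colored elements. If no $b$ has $\h f(b) = F$, then $F$ is never needed for the $f$-matching property, and $(\I \setminus \{F\}, X)$ is already a condition that vacuously satisfies the lemma. If $F$ is needed only sparsely, I would apply \Cref{fut-lemextPi1}-style thinning, or more simply pass first to a reservoir on which the canonical coloring's $F$-class is handled well.

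A cleaner route is to use \Cref{prop:towsner-tree} with a deeper level $n = 1$ instead of level $0$. Take any $b \in \FU(X_1)$ with $\h f(b) = F$. The proposition gives $E$ at level $1$ such that $\h f(a \cup b) = F$ for every $a \in \FU(E)$. I would then choose $\h E = \{a \cup b\}$ for a fixed $a \in \FU(E)$, so that $\FU(F \cup \{X[a \cup b]\})$ is monochromatic. The difficulty is that \Cref{def:fut-forcing-question} requires $\h E \subseteq \FU(E)$, and $a \cup b \notin \FU(E)$. Because of this, I expect the authors instead argue directly: mimic the proof of \Cref{fut-lemextSigma1}, taking $\h \I = (\I \setminus \{F\}) \cup \{F \cup \{X[a]\} : a \in \FU(F_0),\ \FU(F \cup \{X[a]\}) \text{ monochromatic}\}$ and $\h X = X[X_0]$.

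The matching claim is then checked exactly as in \Cref{lemextSigma1-claim4}. Given $\h b \in \FU(X_0)$ with $\h f(\h b) = F$, the half-match property gives $a \in F_0$ with $\h f(a \cup \h b) = \h f(\h b) = F$. This makes $\FU(F \cup \{X[a]\} \cup \{X[\h b]\})$ monochromatic via cases (i)–(iii) of that claim. The delicate point is case (i): it needs $\FU(F \cup \{X[a]\})$ monochromatic, which I expect to obtain by arguing with the colour $\ell^*$ exactly as in (\ref{newlemextSigma1-eq3}).
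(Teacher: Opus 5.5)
There is a genuine gap. Your opening reduction, that it suffices to show $(\I,X)\qvdash_F\varphi(G)$, is unsound. The forcing question can be answered negatively; for instance, $\FU(F\cup\{X[b]\})$ may be monochromatic for no $b$. In that case $F$ cannot be extended at all and must be discarded. The missing idea is the dichotomy on the forcing question. If $(\I,X)\qvdash_F\varphi(G)$, apply \Cref{fut-lemextSigma1}. If $(\I,X)\nqvdash_F\varphi(G)$, apply \Cref{fut-lemextPi1} to get a $\PP$-condition $(\I,\h X)\leq(\I,X)$ with $(F,\h X)\Vdash\neg\varphi(G)$. Now no $b\in\FU(\h X)$ can be matched by $F$: otherwise $F\cup\{b\}\in[F,\h X]$ would have $\card F+1$ elements. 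So $(\I\setminus\{F\},\h X)$ is still $f$-matching, extends $(\I,X)$, and satisfies the lemma vacuously. Your subcase ``no $b$ has $\h f(b)=F$'' is a special instance of this. The ``sparse'' case you gesture at is exactly what \Cref{fut-lemextPi1} handles, but you never make it precise.

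Your fallback construction at level $0$ also breaks down, exactly at case (i). Half-matching of $F_0$ only yields $\h f(a\cup\h b)=\h f(\h b)=F$, that is, monochromaticity of $\FU(F\cup\{X[a]\cup X[\h b]\})$ and of $\FU(F\cup\{X[\h b]\})$. It says nothing about $f(X[a])$ or $f(c\cup X[a])$ for $c\in\FU(F)$. So the $a$ it supplies need not pass your filter, and $F\cup\{X[a]\}$ need not belong to $\h\I$. In the proof of \Cref{fut-lemextSigma1}, the corresponding fact (\ref{newlemextSigma1-eq4}) is not derived from (\ref{newlemextSigma1-eq3}). It comes from the hypothesis $\h E\in W$, i.e.\ from a positive answer to the forcing question at some level $n$, which may have to be larger than $0$.

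Here is a concrete failure. Let $F=\{\{0\}\}$, $F'=\{\{1\}\}$, $\I=\{F,F'\}$, $X=\{\{2\},\{3\},\dots\}$. Set $f(c)=1$ if $c=\{1\}$ or $\card(c\setminus\{0,1\})=1$, and $f(c)=0$ otherwise. Then $(\I,X)$ is a $\PP$-condition, and $\{\{0\}\}$ half-matches $X_0=\{\{1,2\},\{3,4\},\dots\}$ for $\h f$. If $\Sc(\I,X)$ starts with this pair, your $\h\I$ is $\{F'\}$. But every element of $\FU(X[X_0])$ has colour $0$, so $(\{F'\},X[X_0])$ is not $f$-matching. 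In this example the forcing question holds at level $1$ but not at level $0$.
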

\begin{proof}
Consider the following $\Sigma^0_1$-formula $\varphi(G)$:
\begin{align*}
\text{\qt{$G$ contains at least~$\card F + 1$ elements.}}
\end{align*}
We have two cases:
\smallskip

\noindent
Case 1: $(\I, X) \qvdash_F \varphi(G)$. By \Cref{fut-lemextSigma1}, there is a $\PP$-condition $(\h \I, \h X) \leq (\I, X)$ such that for every~$\h F \in \h \I \setminus (\I \setminus \{F\})$, $\h F \succeq F$ and $(\h F, \h X) \Vdash \varphi(G)$. By \Cref{def:fut-forcing}(1), $\varphi(\h F)$ holds, so unfolding the definition of~$\varphi$, $\card \h F > \card F$ and we are done.
\smallskip

\noindent
Case 2: $(\I, X) \nqvdash_F \varphi(G)$. By \Cref{fut-lemextPi1}, there is a $\PP$-condition $(\I, \h X) \leq (\I, X)$ such that $(F, \h X) \Vdash \neg \varphi(G)$. We claim that $(\I \setminus \{F\}, \h X)$ is a $\PP$-condition.
It is clearly a $\PP$-precondition, so we need to check that $(\I \setminus \{F\}, \h X)$ is $f$-matching.
Pick some~$b \in \FU(\h X)$. Since $(\I, \h X)$ is $f$-matching, there is some~$E \in \I$ such that
\begin{align}
\text{$\FU(E \cup \{b\})$ is $f$-monochromatic}
\end{align}
If $E = F$, then $F \cup \{b\} \in [F, \h X]$ and $\varphi(F \cup \{b\})$ holds. So by \Cref{def:fut-forcing}(2), $(F, \h X) \not\Vdash \neg \varphi(G)$, contradicting our assumption. So $E \neq F$, hence $E \in \I \setminus \{F\}$.
This completes our proof of \Cref{lem:positive-requirement}.
\end{proof}

Iterating \Cref{lem:positive-requirement} on each stem successively, we obtain:

\begin{lemma}\label[lemma]{lem:positive-requirement-full}
Let $(\I, X)$ be a $\PP$-condition. There is a $\PP$-condition $(\h \I, \h X) \leq (\I, X)$ such that for every~$\h F \in \h \I$ and $F \in \I$ with $\h F \succeq F$, $\card \h F > \card F$.
\end{lemma}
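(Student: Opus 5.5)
Enumerate $\I = \{F_0, \dots, F_{k-1}\}$ and apply \Cref{lem:positive-requirement} once per original stem, for $k$ rounds, building conditions
$$(\I, X) = (\I_0, X_0) \geq (\I_1, X_1) \geq \dots \geq (\I_k, X_k),$$
and then take $(\h \I, \h X) = (\I_k, X_k)$. The invariant maintained after round $j$ is:
\begin{itemize}
\item[(a)] $\I_j$ is the disjoint union of $\{F_j, \dots, F_{k-1}\}$ (unchanged) and a finite set of block sequences;
\item[(b)] each element of this second set extends some $F_i$ with $i < j$ and has cardinality strictly larger than $\card F_i$.
\end{itemize}
At round $j$, since $F_j \in \I_j$, apply \Cref{lem:positive-requirement} to $(\I_j, X_j)$ and $F_j$. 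This yields $(\I_{j+1}, X_{j+1}) \leq (\I_j, X_j)$ in which every element of $\I_{j+1} \setminus (\I_j \setminus \{F_j\})$ extends $F_j$ and is strictly longer than $F_j$.

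For the invariant to pass to the next round, we need that $\I_{j+1} \setminus (\I_j \setminus \{F_j\})$ consists only of these new extensions. We also need that no other element of $\I_j$ is modified. The second point holds by the form of the conclusion, since elements of $\I_j \setminus \{F_j\}$ may remain or be dropped, but nothing else is added. The elements not yet treated, $F_{j+1}, \dots, F_{k-1}$, could be dropped (as in Case 2 of the lemma's proof), which only helps. If $F_{i}$ has been dropped when its turn comes, we skip it.

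At the end, take any $\h F \in \h \I$ and $F \in \I$ with $\h F \succeq F$. By the invariant, $\h F$ is one of the extensions added at some round $i$, so $\h F \succeq F_i$ and $\card \h F > \card F_i$. Since both $F$ and $F_i$ are initial segments of $\h F$, they are comparable. If $F = F_i$ we are done. If $F \preceq F_i$, the inequality follows from $\card \h F > \card F_i \geq \card F$. The case $F_i \prec F$ with $F \neq F_i$ is a subtle point. Here both $F_i$ and $F$ are elements of $\I$ with one a proper initial segment of the other, and the statement as written could fail. One may first assume, or arrange, that $\I$ is an antichain under $\preceq$. Alternatively, I would handle this by processing the elements of $\I$ in order of decreasing length and argue that then every final $\h F$ extending $F$ was added at the round of some $F_i \succeq F$. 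Whether this requires a small extra argument is the main point to check, but the intended reading is clearly that each stem of $\I$ that survives must be lengthened.

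Transitivity of $\leq$ holds on preconditions. This is because extension of $\PP$-branches is transitive: if $Y \subseteq \FU(X)$ and $Z \subseteq \FU(Y)$, then $Z \subseteq \FU(X)$, and similarly for the stems. It follows that $(\h \I, \h X) \leq (\I, X)$, and $(\h \I, \h X)$ is a $\PP$-condition because each round produces one.
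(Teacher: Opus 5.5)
Your strategy (apply \Cref{lem:positive-requirement} once to each stem of $\I$, skip stems already dropped, and compose the extensions by transitivity) is the same as the paper's, whose proof is only the sentence ``iterating on each stem successively''. As written, however, your proof is incomplete at its last step. You leave open the case $F_i \prec F \preceq \h F$ with $F \neq F_i$, you suggest the statement might fail there, and neither proposed repair works. You cannot assume $\I$ is a $\preceq$-antichain: the lemma is about arbitrary $\PP$-conditions, and the construction itself can produce comparable stems. For instance, for the formula used in \Cref{lem:positive-requirement}, the set $W$ in \Cref{fut-lemextSigma1} is closed under non-empty initial segments, so $\h\I$ may contain both $F \cup X[\{e_0\}]$ and $F \cup X[\{e_0,e_1\}]$. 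Processing stems by decreasing length does not constrain which extensions of $F_i$ are added at its round, so it does not address the issue either.

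The missing observation is short. If $F \preceq \h F$ and $\card \h F \le \card F$, then $\h F = F$. So the conclusion is equivalent to $\h\I \cap \I = \emptyset$, and in your problematic case you only need $\h F \neq F$.

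Now look at what \Cref{lem:positive-requirement} adds. Case 2 of its proof adds nothing. Case 1, via \Cref{fut-lemextSigma1}, adds only stems of the form $F_i \cup X_i[\h E]$ with $\h E$ non-empty, where $X_i[\h E] \subseteq \FU(X_i) \subseteq \FU(X)$. Every $F \in \I$ satisfies $F < X$, since $(F, X)$ is a $\PP$-branch, and no element of $\FU(X)$ lies below $X$. Hence no added stem belongs to $\I$.

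In fact your bad case is vacuous. If $F_i \prec F$ properly, the element of $F$ following $F_i$ is $< X$, while the corresponding element of $\h F$ lies in $\FU(X)$, so $\h F \not\succeq F$. Each original stem disappears when it is processed, or earlier, and is never re-added. Therefore $\h\I \cap \I = \emptyset$, and the plain iteration works in any order with no antichain hypothesis.

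The same observation is what justifies your invariant (a), namely that new stems never coincide with the untreated stems. Also, (a) should say that $\I_j$ contains a subset of $\{F_j, \dots, F_{k-1}\}$, since stems may be dropped.
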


\subsection{Diagonalization requirements}

Fix a non-arithmetic set~$C$. We will ensure that $C$ is not $G$-c.e.\ by satisfying the following requirements for every Turing index~$e \in \NN$:
\begin{align*}
\Rc^C_e & : W^G_e \neq C
\end{align*}
We shall actually satisfy each requirement in the following strong sense:

\begin{definition}
Given a $\PP$-condition $(\I, X)$ and $F \in \I$, we say that \emph{$(F, X)$ satisfies $\Rc^C_e$} if one of the following holds:
\begin{itemize}
    \item[(1)] Either $(F, X) \Vdash x \in W^G_e$ for some~$x \not \in C$;
    \item[(2)] Or $(F, X) \Vdash x \not \in W^G_e$ for some~$x \in C$.
\end{itemize}
\end{definition}

Note that this uniformity will not be used in the cone avoidance argument.

\begin{lemma}\label[lemma]{lem:diagonalization-requirement}
Let $(\I, X)$ be a $\PP$-condition, $C$ be a non-arithmetic set and $F \in \I$. There is a $\PP$-condition $(\h \I, \h X) \leq (\I, X)$ such that for every~$\h F \in \h \I \setminus (\I \setminus \{F\})$, $\h F \succeq F$ and $(\h F, \h X)$ satisfies $\Rc^C_e$.
\end{lemma}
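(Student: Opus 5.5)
The plan is a standard cone-avoidance argument through the forcing question. For each $x \in \NN$, consider the $\Sigma^0_1$-formula $\varphi_x(G) := x \in W^G_e$, and let
$$
U = \{ x \in \NN : (\I, X) \qvdash_F \varphi_x(G) \}.
$$
By the remark following \Cref{def:fut-forcing-question}, $U$ is $\Sigma^0_1(\Tc(\I, X))$. Since $\Sc(\I, X)$ is an arithmetic Towsner sequence (by \Cref{prop:towsner-seq-arith} applied to the canonical coloring $\hat f$, which is arithmetic because $f$ and $X$ are), $U$ is an arithmetic set. Since $C$ is non-arithmetic, $U \neq C$. So there is some $x$ with either $x \in U \setminus C$ or $x \in C \setminus U$. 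The case split is on which of these holds.

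\textbf{Case 1:} $x \in U$ and $x \notin C$. Then $(\I, X) \qvdash_F \varphi_x(G)$. Apply \Cref{fut-lemextSigma1} to obtain a $\PP$-condition $(\h \I, \h X) \leq (\I, X)$ such that every $\h F \in \h \I \setminus (\I \setminus \{F\})$ satisfies $\h F \succeq F$ and $(\h F, \h X) \Vdash x \in W^G_e$. Because $x \notin C$, clause (1) of the definition shows that $(\h F, \h X)$ satisfies $\Rc^C_e$.

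\textbf{Case 2:} $x \notin U$ and $x \in C$. Then $(\I, X) \nqvdash_F \varphi_x(G)$. Apply \Cref{fut-lemextPi1} to obtain $(\I, \h X) \leq (\I, X)$ with $(F, \h X) \Vdash x \notin W^G_e$. Take $\h \I = \I$. Then $\h \I \setminus (\I \setminus \{F\}) = \{F\}$, and $F \succeq F$ holds trivially. Because $x \in C$, clause (2) of the definition shows that $(F, \h X)$ satisfies $\Rc^C_e$.

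The only delicate point is checking that $U$ is genuinely arithmetic, uniformly in $x$. The forcing question quantifies existentially over a level $n$. For each $n$, it quantifies universally over the finitely many nodes at level $n$ of $\Tc(\I, X)$; these are computable from the sequence $(F_i)_{i \le n}$ read off $\Sc(\I, X)$. For each node $E$, it quantifies existentially over the finitely many $\hat E \subseteq \FU(E)$. The remaining conditions are that $\FU(F \cup X[\hat E])$ is $f$-monochromatic, which is $f \oplus X$-decidable, and that $x \in W_e^{F \cup X[\hat E]}$, which is $\Sigma^0_1(X)$. Hence $U$ is $\Sigma^0_1(\Sc(\I,X) \oplus f \oplus X)$ and therefore arithmetic. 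Everything else is a direct invocation of the two extension lemmas.
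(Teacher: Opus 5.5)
Your proposal is correct and follows the paper's proof essentially verbatim. You take the set of $x$ with $(\I, X) \qvdash_F x \in W^G_e$, note that it is arithmetic and so differs from $C$, and settle the two cases with the $\Sigma^0_1$- and $\Pi^0_1$-extension lemmas; your explicit choice $\h \I = \I$ in Case 2 and your relativization to $\Sc(\I,X) \oplus f \oplus X$ helpfully spell out details the paper leaves implicit.
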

\begin{proof}
Consider the following $\Sigma^0_1(\Tc(\I, X))$ set:
\begin{align*}
W = \{ x \in \NN : (\I, X) \qvdash_F x \in W_e^G \}
\end{align*}
Since $W$ is arithmetic, but $C$ is not, $W \neq C$.
We therefore have two cases:
\smallskip

\noindent
Case 1: $x \in W$ for some~$x \not \in C$. Unfolding the definition of~$W$, 
$$(\I, X) \qvdash_F x \in W_e^G.$$
By \Cref{fut-lemextSigma1}, there is a $\PP$-condition $(\h \I, \h X) \leq (\I, X)$ such that for every~$\h F \in \h \I \setminus (\I \setminus \{F\})$, $\h F \succeq F$ and $(\h F, \h X) \Vdash x \in W_e^G$, and we are done.
\smallskip

\noindent
Case 2: $x \not \in W$ for some~$x \in C$. Unfolding the definition of~$W$, 
$$(\I, X) \nqvdash_F x \in W_e^G.$$
By \Cref{fut-lemextPi1}, there is a $\PP$-condition $(\I, \h X) \leq (\I, X)$ such that $(F, \h X) \Vdash x \not \in W^G_e$.
In both cases, we are done. This completes our proof of \Cref{lem:positive-requirement}.
\end{proof}

Iterating \Cref{lem:diagonalization-requirement} on each stem successively, we obtain:

\begin{lemma}\label[lemma]{lem:diagonalization-requirement-full}
Let $(\I, X)$ be a $\PP$-condition and $C$ be a non-arithmetic set. There is a $\PP$-condition $(\h \I, \h X) \leq (\I, X)$ such that for every~$\h F \in \h \I$, $(\h F, \h X)$ satisfies $\Rc^C_e$.
\end{lemma}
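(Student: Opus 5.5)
We prove \Cref{lem:diagonalization-requirement-full} by applying \Cref{lem:diagonalization-requirement} once to each stem of the original condition, in turn. Enumerate $\I = \{F_0, \dots, F_{k-1}\}$. We build a sequence of $\PP$-conditions
$$(\I, X) = (\I_0, X_0) \geq (\I_1, X_1) \geq \dots \geq (\I_k, X_k),$$
and we maintain the following invariant. Every $E \in \I_j$ either belongs to $\{F_j, \dots, F_{k-1}\}$ (an untouched stem), or extends some $F_i$ with $i < j$ and is such that $(E, X_j)$ satisfies $\Rc^C_e$.

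At stage~$j$, if $F_j \notin \I_j$, set $(\I_{j+1}, X_{j+1}) = (\I_j, X_j)$. This can happen if an earlier extension coincided with $F_j$; since the new stems properly contain nothing but extensions, we can handle this by treating $\I$ as indexed. Otherwise, apply \Cref{lem:diagonalization-requirement} to $(\I_j, X_j)$ and $F_j$. This yields $(\I_{j+1}, X_{j+1}) \leq (\I_j, X_j)$ in which every element of $\I_{j+1} \setminus (\I_j \setminus \{F_j\})$ extends $F_j$ and satisfies $\Rc^C_e$ with reservoir $X_{j+1}$.

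The key point is that satisfaction of $\Rc^C_e$ survives shrinking the reservoir. Suppose $(E, X_j)$ satisfies $\Rc^C_e$ and $X_{j+1} \subseteq \FU(X_j)$.
\begin{itemize}
\item A $\Sigma^0_1$-forcing $(E, X_j) \Vdash x \in W^G_e$ depends only on $E$, by \Cref{def:fut-forcing}(1), so $(E, X_{j+1}) \Vdash x \in W^G_e$ as well.
\item A $\Pi^0_1$-forcing persists because $[E, X_{j+1}] \subseteq [E, X_j]$. Indeed, $\FU(E \cup X_{j+1}) \subseteq \FU(E \cup X_j)$, since $X_{j+1} \subseteq \FU(X_j)$ and the elements of $E$ lie below $X_j$.
\end{itemize}
So the stems already handled keep satisfying $\Rc^C_e$ with respect to $X_{j+1}$. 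Moreover, the stems in $\I_j \setminus \{F_j\}$ are left unchanged as elements of $\I_{j+1}$, so they still satisfy the invariant. Finally, the relation $\leq$ between $\PP$-conditions is transitive: if $(E, Z) \leq (E', Y) \leq (F, X)$, then $F \preceq E$, $Z \subseteq \FU(Y) \subseteq \FU(X)$, and $E \setminus F \subseteq \FU(Y) \cup \FU(X) \subseteq \FU(X)$. Hence $(\I_k, X_k) \leq (\I, X)$.

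At the end, every element of $\I_k$ descends from some $F_i$ that has been processed, so $(\h \I, \h X) := (\I_k, X_k)$ is as required.

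The only delicate point is the bookkeeping: we must ensure that a stem produced at stage~$j$ is never reprocessed, and that no untouched $F_{j'}$ is lost or altered. Both follow from the form of the conclusion of \Cref{lem:diagonalization-requirement}, which only replaces $F_j$ and keeps $\I_j \setminus \{F_j\}$ intact.
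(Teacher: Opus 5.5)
Your proof is correct and follows the paper's route. The paper's whole argument is the one-line remark that one iterates \Cref{lem:diagonalization-requirement} over the stems of $\I$ successively. You also supply the details the paper leaves implicit: satisfaction of $\Rc^C_e$ persists when the reservoir shrinks, because $\Sigma^0_1$-forcing depends only on the stem and $[E, X_{j+1}] \subseteq [E, X_j]$, and the extension relation $\leq$ is transitive.
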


Note that the way $\Rc^G_e$ is satisfied depends on the $\PP$-branch.
We are now ready to prove our main theorem:

\subsection{Proof of \Cref{main:cone-avoidance}}

Let $f : \Pfin(\NN) \to \ell$ be an arithmetic coloring for some~$\ell \geq 1$, and let $C$ be a non-arithmetic set.
By \Cref{lem:positive-requirement-full} and \Cref{lem:diagonalization-requirement-full}, there is an infinite decreasing sequence of $\PP$-conditions
$$
(\I_0, X_0) \geq (\I_1, X_1) \geq \cdots
$$
such that for every~$n \in \NN$,
\begin{itemize}
    \item[(1)] every stem $F \in \I_n$ contains at least~$n$ elements;
    \item[(2)] for every stem $F \in \I_n$, $(F, X_n)$ satisfies $\Rc^C_e$.
\end{itemize}
Let $G$ be an element in $\bigcap_n [\I_n, X_n]$. Note that such a class is non-empty, as it is the intersection of a decreasing sequence of non-empty compact classes. By definition of the denotations, $\FU(G)$ is an $f$-monochromatic block sequence. By (1), $G$ is infinite, and by (2), $C$ is not $G$-c.e.\ and a fortiori $C \not \leq_T G$. This completes the proof of \Cref{main:cone-avoidance}.
\bigskip

The existence of an $f$-computable full-match in general is still an open question, of central importance to the computable analysis of Hindman's theorem. Towsner constructed an $f$-arithmetic full-match by iteratively taking infinite paths through Towsner trees. We now prove that for some computable colorings, the existence of such trees implies the existence of the halting set. This however does not rule out the possibility of a new proof of the existence of a full-match which does not involve Towsner trees. In particular, for the specific coloring constructed by Blass, Hirst and Simpson~\cite{blassLogicalAnalysisTheorems1987} to prove the lower bound of Hindman's theorem, and used in \Cref{prop:towsner-zp}, there always exists a computable full-match, with a witnessing set~$F$ of size 2.

\begin{proposition}\label[proposition]{prop:towsner-zp}
There is a computable coloring $f : \Pfin(\NN) \to 2$ such that
every Towsner sequence computes $\emptyset'$.
\end{proposition}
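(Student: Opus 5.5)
The plan is to take the Blass--Hirst--Simpson coloring and show that the fundamental property of the Towsner tree (\Cref{prop:towsner-tree}) forces the reservoirs $X_n$ to contain only ``long gaps'' above the levels of the tree. Long gaps are exactly what lets one read off $\emptyset'$.

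\textbf{The coloring.} Fix a computable enumeration $(\emptyset'_s)_s$ of the halting set. For $x<y$, call the gap $(x,y)$ \emph{short} if $\emptyset'_y \uh x \neq \emptyset'_x \uh x$, and \emph{long} otherwise. Write $s(x,y)\in\{0,1\}$ for this indicator; for blocks $a<b$, write $s(a,b)=s(\max a,\min b)$. For $a=\{x_0<\dots<x_k\}$, let $f(a)$ be the parity of the number of short gaps $(x_i,x_{i+1})$; this $f$ is computable. Two properties are immediate:
\begin{itemize}
\item[(i)] for $a<b$, $f(a\cup b)=f(a)+f(b)+s(a,b) \bmod 2$;
\item[(ii)] monotonicity: if $x'\le x<y\le y'$ and $(x',y')$ is long, then $(x,y)$ is long, since $\emptyset'\uh x$ has not changed between stages $x$ and $y$.
\end{itemize}
Moreover, if $(x,y)$ is long and, in addition, $\emptyset'_{y}\uh x=\emptyset'\uh x$, then $y$ bounds the settling time of $\emptyset'\uh x$. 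This is the form in which we will extract $\emptyset'$.

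\textbf{Combinatorial core.} Let $\Sc=\{(\ell_n,f_n,F_n,X_n)\}_n$ be a Towsner sequence for~$f$. By (i), the condition $f(c\cup b)=f(b)$ from \Cref{prop:towsner-tree} is equivalent to $f(c)=s(c,b)$. Applying the proposition at level $n+2$ to any $b\in\FU(X_{n+2})$ gives a path $E=\{a_0<\dots<a_{n+2}\}$, with each $a_i\in F_i$, such that $f(c)=s(c,b)$ for every $c\in\FU(E)$. Now take $i<j$ with $i,j\ge n$, and compare $c=a_j$ with $c=a_i\cup a_j$. Both have the same maximum, so $s(a_j,b)=s(a_i\cup a_j,b)$. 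Expanding $f(a_i\cup a_j)$ with (i) then yields $f(a_i)+s(a_i,a_j)=0$; combined with $f(a_i)=s(a_i,b)$, this gives $s(a_i,a_j)=s(a_i,b)$. Playing this off against the monotonicity (ii), across three indices $n\le i<j<k$ (here one uses $a_i<a_j<a_k<b$), I expect to show that $s(a_i,b)=0$ for some $i\in\{n,n+1,n+2\}$. In other words, some gap from an element of level $\ge n$ to $b$ is long. Since $F_i\subseteq\FU(X_{i-1})$ and $F_i<X_i$, the elements of $F_i$ for $i\geq n$ lie above $\min F_n$, and $\min F_n\to\infty$. By (ii), we then obtain: every $b\in\FU(X_{n+2})$ satisfies $\emptyset'_{\min b}\uh \min F_n=\emptyset'_{\max a}\uh\min F_n$ for some $a\in F_n\cup F_{n+1}\cup F_{n+2}$. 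This is the step I expect to be the main obstacle. The parity bookkeeping must genuinely exclude the configuration in which all gaps are short, and it may require taking $E$ at a slightly deeper level, or using more unions $c\in\FU(E)$, than the three indices above.

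\textbf{Computing $\emptyset'$.} Given $m$, the procedure is as follows, uniformly in $\Sc$:
\begin{itemize}
\item[(1)] find $n$ with $\min F_n>m$;
\item[(2)] take $b\in X_{n+2}$ with $\min b$ larger than $\max\bigcup_{i\le n+2}F_i$;
\item[(3)] output $\emptyset'_{\min b}\uh m$.
\end{itemize}
For correctness, suppose the output were wrong, say some $p<m$ enters $\emptyset'$ after stage $\min b$. Choose $b'\in X_{n+2}$ with $\min b'$ beyond that entry time. Then every gap from an element of $F_n\cup F_{n+1}\cup F_{n+2}$ to $b'$ would be short, contradicting the core claim. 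Hence $\emptyset'\le_T\Sc$.
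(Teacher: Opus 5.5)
There is a genuine gap, and it lies in the coloring itself. Your gap predicate ($(x,y)$ short iff $\emptyset'_y\uh x\neq\emptyset'_x\uh x$) is not the Blass--Hirst--Simpson one, which compares against the true $\emptyset'\uh x$. For your $f$ the proposition is false.

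Since $\emptyset'$ is not computable, the c.e.\ set $D=\{x : \exists y>x\ \emptyset'_y\uh x\neq\emptyset'_x\uh x\}$ is infinite (otherwise $\emptyset'\uh x=\emptyset'_x\uh x$ for almost all $x$). So you can computably pick $x_0<y_0<x_1<y_1<\cdots$ in $D$ such that all gaps $(x_i,y_i)$ and $(y_i,x_{i+1})$ are short in your sense. Shortness with a fixed left endpoint persists as the right endpoint grows. Hence, with $h_i=\{x_i,y_i\}$ and $H=\{h_i\}$, every $c\in\FU(H)$ has an odd number of short gaps. So $H$ is computable with $\FU(H)$ monochromatic, and $F_n=\{h_n\}$, $X_n=\{h_{n+1},h_{n+2},\dots\}$ is a computable Towsner sequence (each $f_{n+1}$ is constant on $\FU(X_n)$).

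In this sequence, every gap from an element of $F_n\cup F_{n+1}\cup F_{n+2}$ to any $b\in\FU(X_{n+2})$ is short. So your combinatorial core claim is false, not merely unproven: the relations $s(a_i,a_j)=s(a_i,b)=f(a_i)$ you derive are all consistent with the value $1$. Your monotonicity (ii) is also wrong. That $(x',y')$ is long says nothing about elements of $[x',x)$ entering during stages $(x,y]$. Your predicate is monotone only in the right endpoint, and in the direction that makes gaps to a later $b$ \emph{more} likely to be short.

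The missing idea is the asymmetry built into the paper's coloring: $f(a)$ is the parity of $\VSG(a)$, where $(s,t)$ is very short in $a$ iff $\emptyset'_t\uh s\neq\emptyset'_{\max a}\uh s$. The argument runs as follows.

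\begin{enumerate}
\item Apply \Cref{prop:towsner-tree} with some $b\in X_n$ chosen so late that $\emptyset'_{\min b}\uh\max a=\emptyset'\uh\max a$ for every $a\in F_n$. This choice is non-effective, which is harmless.
\item For such $b$, the gap $(\max a,\min b)$ is not very short in $a\cup b$, and the very short gaps of $a$ inside $a\cup b$ are exactly its true short gaps. So $\VSG(a\cup b)=\SG(a)+\VSG(b)$, and $f(a\cup b)=f(b)$ forces $\SG(a)$ to be even for all $a\in\FU(E)$.
\item The true $\SG$ is additive up to the middle gap. Hence every gap between elements of $E$ is truly long, i.e.\ $\emptyset'_{\min b}\uh\max a=\emptyset'\uh\max a$ for $a<b$ in $E$.
\item Choose $n$ with $\max a>s$ for all $a\in F_n$. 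Then $s\in\emptyset'$ iff $s\in\emptyset'_t$, where $t=\max\{\min b : b\in F_{n+1}\}$.
\end{enumerate}
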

\begin{proof}
This is actually witnessed by the coloring of Blass, Hirst and Simpson~\cite{blassLogicalAnalysisTheorems1987}, based on the notions of short and very short gaps. 
Given a finite set 
$$a  = \{ s_0 < \dots < s_{n-1} \}$$
a \emph{gap} is a pair $(s_i, s_{i+1})$.
A gap $(s, t)$ is \emph{short} if $\emptyset'_t \uh_s \neq \emptyset' \uh_s$, where $\emptyset'_t$ is the $t$th c.e. approximation of~$\emptyset'$. A gap $(s, t)$ is \emph{very short} if $\emptyset'_t \uh s \neq \emptyset'_{\max a} \uh_s$. We write $\SG(a)$ for the number of short gaps and $\VSG(a)$ for the number of very short gaps in~$a$. 
Note that any very short gap is short, and that being very short is decidable, while being short is $\emptyset'$-decidable. Let $f : \Pfin(\NN) \to 2$ be defined by 
$$f(a) := \text{ parity of } \VSG(a).$$
Let
$$\Sc = \{ (\ell_0, f_0, F_0, X_0), (\ell_1, f_1, F_1, X_1), \cdots \}$$
be a Towsner sequence, with Towsner tree $\Tc_{\Sc}$.

\begin{claim}\label[claim]{claim:towsner-zp1}
For every~$n \in \NN$, there is some finite block sequence $E$ at level~$n$ in $\Tc_{\Sc}$ such that
for every~$a \in \FU(E)$, $\SG(a)$ is even.
\end{claim}
\begin{proof}
Let $b \in X_n$ be sufficiently large so that 
\begin{align}\label{eq:towsner-zp-eq1}
    \text{for every $a \in F_n$, }  \emptyset'_{\min b} \uh_{\max a} = \emptyset' \uh_{\max a}.
\end{align}
By \Cref{prop:towsner-tree}, there is some finite block sequence~$E$ at level~$n$ in $\Tc_{\Sc}$ such that
\begin{align}\label{eq:towsner-zp-eq2}
    \text{for every } a \in \FU(E), f(a \cup b) = f(b).
\end{align}
Fix some~$a \in \FU(E)$. By (\ref{eq:towsner-zp-eq1}),
$$
\VSG(a \cup b) = \SG(a) + \VSG(b)
$$
So by (\ref{eq:towsner-zp-eq2}), $\SG(a)$ is even. This proves \Cref{claim:towsner-zp1}.
\end{proof}

\noindent
Given $a < b \in \Pfin(\NN)$, 
$$
\SG(a \cup b) = \SG(a) + \SG(b) + \begin{cases}
    1 & \text{ if $(\max a, \min b)$ is short}\\
    0 & \text{ otherwise}
\end{cases}
$$
Thus, for any block sequence~$E$ satisfying \Cref{claim:towsner-zp1} and any $a < b \in E$,
\begin{align}\label{eq:towsner-zp-eq3}
\emptyset'_{\min b} \uh_{\max a} = \emptyset' \uh_{\max a}
\end{align}
To decide whether $s \in \emptyset'$, $\Sc$-computably search for some~$n \in \NN$ such that for every~$a \in F_n$, $\max a > s$. Such an~$n$ exists since $F_0 < F_1 < \dots$.
Then, let $t$ be the maximum element of~$\{ \min b : b \in F_{n+1} \}$, and check whether $s \in \emptyset'_t$.
To see this algorithm is correct, let $E$ satisfy \Cref{claim:towsner-zp1}, and let $a$ and $b$ be the unique elements of $E \cap F_n$ and $E \cap F_{n+1}$, respectively. By choice of~$n$, we have $\max a > s$ and by choice of~$t$, $\min b \leq t$, so
$$
\emptyset' \uh_{\max a} \overset{(\ref{eq:towsner-zp-eq3})}=  \emptyset'_{\min b} \uh_{\max a}  \subseteq  \emptyset'_t \uh_{\max a} \subseteq  \emptyset' \uh_{\max a}
$$
So $s \in \emptyset'$ iff $s \in \emptyset'_t$. This completes the proof of \Cref{prop:towsner-zp}.
\end{proof}

\section{Finite union theorem for simple colorings}

In this section, we study restrictions of the finite union theorem for various notions of simplicity.
This is motivated by the fact that the proof of $\RCA_0 \vdash \FUT \to \ACA_0$ involves a degenerate coloring that we call \emph{simple}. We shall see that the restriction of $\FUT$ to simple colorings is equivalent to $\ACA_0$.

\begin{definition}
Let $X = \{ a_0, a_1, \dots \}$ be a block sequence and $f : \FU(X) \to \ell$ be a coloring for some~$\ell \geq 1$.
\begin{itemize}
    \item Given $b_0, b_1 \in \FU(X)$, we write $b_0 \sim b_1$ if $\min b_0 = \min b_1$ and $\max b_0 = \max b_1$. If furthermore $f(b_0) = f(b_1)$, then $b_0 \sim_f b_1$.
    \item The coloring $f$ is \emph{simple} if for every~$n \in \NN$ and every $b_0, b_1 \in \FU(X)$ such that $\max a_n < \min b_0, b_1$ and $b_0 \sim_f b_1$, then $f(a_n \cup b_0) = f(a_n \cup b_1)$.
    \item The coloring $f$ is \emph{color-simple} if for every~$n \in \NN$ and every $b_0, b_1 \in \FU(X)$ such that $\max a_n < \min b_0, b_1$ and $f(b_0) = f(b_1)$, then $f(a_n \cup b_0) = f(a_n \cup b_1)$.
\end{itemize}
\end{definition}

We first see that the coloring of Blass, Hirst and Simpson~\cite{blassLogicalAnalysisTheorems1987} is simple.
Recall that, given a finite set 
$$a  = \{ s_0 < \dots < s_{n-1} \}$$
a \emph{gap} is a pair $(s_i, s_{i+1})$.
A gap $(s, t)$ is \emph{short} if $\emptyset'_t \uh_s \neq \emptyset' \uh_s$, where $\emptyset'_t$ is the $t$th c.e. approximation of~$\emptyset'$. A gap $(s, t)$ is \emph{very short} if $\emptyset'_t \uh s \neq \emptyset'_{\max a} \uh_s$. We write $\SG(a)$ for the number of short gaps and $\VSG(a)$ for the number of very short gaps in~$a$.

\begin{lemma}
The following coloring is simple:
$$f : \Pfin(\NN) \ni a \mapsto \text{ parity of } \VSG(a).$$
\end{lemma}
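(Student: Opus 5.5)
The plan is a direct computation: write the very-short-gap count of $a_n \cup b$ as the count for $b$ plus a correction term, and check that the correction depends only on $a_n$, $\min b$ and $\max b$. I will do this for an arbitrary block sequence $X = \{a_0 < a_1 < \cdots\}$, with $f$ restricted to $\FU(X)$; the case of interest is $X = \{\{0\},\{1\},\dots\}$, where $a_n = \{n\}$. Fix $n$ and $b_0, b_1 \in \FU(X)$ with $\max a_n < \min b_0, \min b_1$ and $b_0 \sim_f b_1$. Set $u = \min b_0 = \min b_1$ and $m = \max b_0 = \max b_1$. We must show that $\VSG(a_n \cup b_0)$ and $\VSG(a_n \cup b_1)$ have the same parity.

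The key observation is that whether a gap is very short in a set $c$ depends only on the gap and on $\max c$. For $i < 2$, we have $\max(a_n \cup b_i) = \max b_i = m$, because $a_n < b_i$. The gaps of $a_n \cup b_i$ split into three disjoint groups:
\begin{itemize}
    \item[(i)] the gaps of $a_n$;
    \item[(ii)] the single gap $(\max a_n, u)$;
    \item[(iii)] the gaps of $b_i$.
\end{itemize}
A gap $(s,t)$ is very short in $a_n \cup b_i$ iff $\emptyset'_t \uh_s \neq \emptyset'_{m} \uh_s$. This is the same condition that determines whether a gap of $b_i$ is very short in $b_i$, since $\max b_i = m$. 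Hence
$$
\VSG(a_n \cup b_i) = \VSG(b_i) + k(a_n, u, m),
$$
where $k(a_n,u,m)$ counts the very short gaps, relative to the maximum $m$, among the gaps of $a_n$ together with $(\max a_n, u)$. This quantity depends only on $a_n$, $u$ and $m$, so it is the same for $i = 0$ and $i = 1$.

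Since $f(b_0) = f(b_1)$, the numbers $\VSG(b_0)$ and $\VSG(b_1)$ have the same parity. Adding the common term $k(a_n,u,m)$ preserves this, so $f(a_n \cup b_0) = f(a_n \cup b_1)$, which is simplicity.

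The only delicate step is the bookkeeping point that "very short" is measured against the maximum of the whole set $a_n \cup b_i$, not of the piece containing the gap. This is exactly why the hypothesis $\max b_0 = \max b_1$ is needed; the equality $\min b_0 = \min b_1$ handles the connecting gap $(\max a_n, u)$.
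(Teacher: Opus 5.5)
Your proof is correct and follows the paper's argument. Both write $\VSG(a_n \cup b_i)$ as $\VSG(b_i)$ plus a correction term depending only on $a_n$, $\min b_i$ and $\max b_i$, and then use $f(b_0)=f(b_1)$ to conclude by parity. Your version is slightly more general, since it handles an arbitrary block sequence and so also counts gaps inside $a_n$; it also makes explicit a point the paper leaves implicit: the gaps of $b_i$ keep their very-short status because $\max(a_n \cup b_i) = \max b_i$.
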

\begin{proof}
Note that $\Pfin(\NN) = \FU(\{ a_0, a_1, \dots \}$ where $a_n = \{n\}$.
Fix some~$n \in \NN$ and some $b_0, b_1 \in \Pfin(\NN)$ such that $n < \min b_0, b_1$ and $b_0 \sim_f b_1$.
For every $i < 2$,
$$
\VSG(a_n \cup b_i) = \VSG(b_i) + \begin{cases}
    1 & \text{ if } (n, \min b_i) \mbox{ is very small in } a_n \cup b_i\\
    0 & \text{ otherwise}
\end{cases}
$$
Since $b_0 \sim b_1$, $\min b_0 = \min b_1$, $\max b_0 = \max b_1$, so $(n, \min b_0)$ is very small in $a_n \cup b_0$ iff $(n, \min b_1)$ is very small in $a_n \cup b_1$. Moreover, since $b_0 \sim_f b_1$, $f(b_0) = f(b_1)$, so the parity of $\VSG(b_0)$ and of $\VSG(b_1)$ is the same. It follows that $f(a_n \cup b_0) = f(a_n \cup b_1)$.
\end{proof}

\subsection{Color-simple colorings over~$\RCA_0 + \BSig_2$}

Given $X$, $f$ and $a \in \FU(X)$, we write $(\ddagger_a)$ for the property 
\begin{quote}
\qt{for every $b_0, b_1 \in \FU(X)$ such that $\max a < \min b_0, b_1$ and $f(b_0) = f(b_1)$, then $f(a \cup b_0) = f(a \cup b_1)$.}
\end{quote}

It is not clear at first sight that color-simplicity is closed under taking block sub-sequences.
The following lemma shows that it is the case.

\begin{lemma}\label[lemma]{lem:fut-color-simple-to-strong}
A coloring $f : \FU(X) \to \ell$ is color-simple iff for every~$a \in \FU(X)$, the property $(\ddagger_a)$ holds.
\end{lemma}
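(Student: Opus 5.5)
The backward direction is immediate: each generator $a_n$ lies in $\FU(X)$, so $(\ddagger_{a_n})$ for all $n$ is exactly color-simplicity.

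For the forward direction, assume $f$ is color-simple and fix $a \in \FU(X)$. Write $a = a_{n_0} \cup a_{n_1} \cup \dots \cup a_{n_k}$ with $n_0 < n_1 < \dots < n_k$. I would prove $(\ddagger_a)$ by induction on $k$, peeling off the generators from the top down. That is, I prove the statement for the suffixes $c_j = a_{n_j} \cup \dots \cup a_{n_k}$, by downward induction on $j$ from $k$ to $0$.

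The base case $j = k$ is $(\ddagger_{a_{n_k}})$, which is color-simplicity. For the inductive step, take $b_0, b_1 \in \FU(X)$ with $\max c_j < \min b_0, b_1$ and $f(b_0) = f(b_1)$.

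\begin{itemize}
\item By the induction hypothesis $(\ddagger_{c_{j+1}})$, applied since $\max c_{j+1} = \max c_j$, we get $f(c_{j+1} \cup b_0) = f(c_{j+1} \cup b_1)$.
\item Now $c_{j+1} \cup b_i \in \FU(X)$ and $\max a_{n_j} < \min(c_{j+1} \cup b_i)$.
\item So color-simplicity at $a_{n_j}$ applied to the pair $c_{j+1} \cup b_0,\ c_{j+1} \cup b_1$ yields $f(c_j \cup b_0) = f(c_j \cup b_1)$.
\end{itemize}

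Taking $j = 0$ gives $(\ddagger_a)$.

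The only delicate point is formalization, given the section title's base theory $\RCA_0 + \BSig_2$. Color-simplicity is a $\Pi^0_1(X \oplus f)$ statement. The induction is over the length of the decomposition of the finite set $a$, with a $\Pi^0_1$ induction formula, namely $(\ddagger_{c_j})$. $\Pi^0_1$-induction is available in $\RCA_0$ since it is equivalent to $\Sigma^0_1$-induction. So no real obstacle is expected; I would just phrase the induction as ``for all $j \le k$, $(\ddagger_{c_{k-j}})$'' and apply $\Pi^0_1$-induction on $j$.
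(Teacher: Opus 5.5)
Your proof is correct and is essentially the paper's argument. The paper inducts on the union-size of $a$, splitting $a = a_i \cup a'$ with $a_i$ the least generator, applies the induction hypothesis $(\ddagger_{a'})$, and then applies color-simplicity at $a_i$ to the pair $a' \cup b_0,\ a' \cup b_1$; this is exactly your step from $c_{j+1}$ to $c_j$. Your remark that the induction formula is $\Pi^0_1$, and hence the induction is available in $\RCA_0$, is correct bookkeeping that the paper leaves implicit.
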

\begin{proof}
The backward direction is immediate since for every~$n \in \NN$, $a_n \in \FU(X)$.
Suppose now $f$ is color-simple. Given $a \in \FU(X)$, the \emph{union-size} of $a$, written $\us(a)$ is the cardinality of the set~$H$ such that $a = \bigcup_{n \in H} a_n$. We prove by induction on the union-size of~$a$ that $(\ddagger_{a})$ holds. The case $\us(a) = 1$ holds by color-simplicity of~$f$. Let $a \in \FU(X)$ and let $b_0, b_1 \in \FU(X)$ be such that $\max a < \min b_0, b_1$ and $f(b_0) = f(b_1)$. Say $a = a_i \cup a'$ with $a_i < a'$.
Then $\us(a') < \us(a)$, so by induction hypothesis, $(\ddagger_{a'})$ holds, so $f(a' \cup b_0) = f(a' \cup b_1)$.
Let $b_0' = a' \cup b_0$ and $b_1' = a' \cup b_1$. Then $\max a_i < \min(b_0', b_1')$, $\min b_0' = \min b_1' = \min a'$, and $f(b_0') = f(a' \cup b_0) = f(a' \cup b_1) = f(b_1')$. By color-simplicity of~$f$, $f(a \cup b_0) = f(a_i \cup b_0') = f(a_i \cup b_1') = f(a \cup b_1)$, so $(\ddagger_a)$ holds. 
\end{proof}

Given a color-simple coloring $f : \FU(X) \to \ell$, by \Cref{lem:fut-color-simple-to-strong}, any set $a \in \FU(X)$ induces a partial map $\rho_a : \ell \to \ell$ such that for every~$b \in \FU(X)$ with $a < b$, then $f(b) \in \dom \rho_a$ and $f(a \cup b) = \rho_a(f(b))$. Moreover, for any such $a$ and $b$, $\ran \rho_b \subseteq \dom \rho_a$, and $\rho_{a \cup b} = \rho_a \circ \rho_b$.

\begin{proposition}[$\RCA_0 + \BSig_2$]\label[proposition]{lem:fut-color-simple-to-hom}
Let $X = \{ a_0, a_1, \dots \}$ be a computable block sequence and $f : \FU(X) \to \ell$ be a color-simple coloring for some~$\ell \geq 1$. There is a block sequence $Y \subseteq \FU(X)$ such that $f$ avoids at least one color over $\FU(Y)$.
\end{proposition}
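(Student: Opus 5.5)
The plan is to exploit the partial maps $\rho_a$ introduced just above the statement. By \Cref{lem:fut-color-simple-to-strong}, every $a \in \FU(X)$ induces a partial map $\rho_a : \ell \to \ell$ with the following properties: for every $b \in \FU(X)$ with $a < b$ we have $f(b) \in \dom \rho_a$ and $f(a \cup b) = \rho_a(f(b))$, and $\rho_{a \cup b} = \rho_a \circ \rho_b$. The point is that there are only finitely many partial maps from $\ell$ to $\ell$, and they form a finite semigroup under composition. I assume $\ell \geq 2$; for $\ell = 1$ the statement is degenerate.

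\textbf{Step 1 (pigeonhole on the maps).} I would apply the pigeonhole principle to the coloring $n \mapsto \rho_{a_n}$ of $\NN$ with finitely many colors. Since $\rho_{a_n}$ is determined by a $\Sigma^0_1$ search relative to $f$ and $X$, this is an instance of $\RT^1_{<\infty}$ applied to a $\Delta^0_2$-type coloring. This is the step I expect to be the main obstacle, since it is exactly where $\BSig_2$ must be used, and the definability of $n \mapsto \rho_{a_n}$ has to be checked carefully. If that approach is too costly, the fallback is to work directly with the finitely many maps $\rho_a$ for $a$ ranging over finite unions, using $\BSig_2$ to bound the searches. The output of this step is an infinite set $\{ n_0 < n_1 < \cdots \}$ and a fixed partial map $\rho$ with $\rho_{a_{n_i}} = \rho$ for all $i$.

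\textbf{Step 2 (idempotent power and new blocks).} In the finite semigroup of partial maps there is some $1 \leq k \leq \ell!$ such that $e = \rho^k$ is idempotent. I would then form new blocks $c_j$, each the union of $2k$ consecutive chosen blocks $a_{n_i}$, and set $Y = \{c_0 < c_1 < \cdots\}$.

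Every $d \in \FU(Y)$ can be written as $d = u \cup v$, where $u$ is the union of the first $k$ blocks $a_{n_i}$ occurring in $d$ and $v$ is the nonempty remainder. Then $f(d) = \rho^k(f(v)) = e(f(v)) \in \ran e$. In the same way, $\rho_c = e$ for every $c \in \FU(Y)$.

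\textbf{Step 3 (case split on $e$).} If $\ran e \neq \ell$, then $f$ avoids every color outside $\ran e$ on $\FU(Y)$, and we are done.

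Otherwise $\ran e = \ell$, and since $e$ is idempotent it is the identity on $\ell$. Then for every $d = c_{j_1} \cup \dots \cup c_{j_m} \in \FU(Y)$ with $j_1 < \cdots < j_m$, we get $f(d) = \rho_{c_{j_1} \cup \dots \cup c_{j_{m-1}}}(f(c_{j_m})) = f(c_{j_m})$. So the color of $d$ depends only on its last block.

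A second application of $\RT^1_\ell$, which is available in $\BSig_2$ since $j \mapsto f(c_j)$ is computable in the data, yields an infinite subsequence $Y' \subseteq Y$ on which $f(c_j)$ is constant. By the previous paragraph, $\FU(Y')$ is then $f$-monochromatic, and so it avoids every other color because $\ell \geq 2$.

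Finally, all constructions are effective relative to $X$, $f$ and the pigeonhole solutions, so the required $Y$ exists in the model.
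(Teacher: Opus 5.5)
Your Step~1 has a genuine gap, and it is exactly the point the paper's proof is built to handle. Without further information, $\dom\rho_{a_n} = \{ f(b) : b \in \FU(X),\ b > a_n \}$ is only a $\Sigma^0_1$ subset of $\ell$. So the map $n \mapsto \rho_{a_n}$ is merely $\Delta^0_2$-definable and need not exist as a set in a model of $\RCA_0 + \BSig_2$. The principle $\BSig_2$ is equivalent to $\RT^1_{<\infty}$ for colorings that \emph{are sets}. The pigeonhole principle for $\Delta^0_2$ colorings is essentially $\mathsf{D}^2_{<\infty}$, a form of stable Ramsey's theorem for pairs, and it is not provable in $\RCA_0 + \BSig_2$: it already fails in the $\omega$-model of computable sets, because of $\Delta^0_2$ bi-immune sets. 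Your fallback does not help either. $\BSig_2$ only bounds searches that are known to terminate, but the search for a $b > a_n$ with $f(b) = x$ never halts when $x \notin \dom\rho_{a_n}$. Knowing which colors these are is precisely the missing $\Pi^0_1$ information.

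The missing idea is a simple case split (the paper's Claim~1). If $x \notin \dom\rho_{a_n}$ for some $n$ and some $x < \ell$, then no $b \in \FU(X)$ with $b > a_n$ has color $x$, so $Y = \{ a_j : j > n \}$ already avoids $x$ and you are done. Otherwise every $\rho_a$ is total. Then $\rho_a(x)$ is computed by searching for any $b > a$ in $\FU(X)$ with $f(b) = x$ and outputting $f(a \cup b)$. This search always halts, so $n \mapsto \rho_{a_n}$ is $(f \oplus X)$-computable and the ordinary $\RT^1_{<\infty}$, i.e.\ $\BSig_2$, applies.

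With this repair, your Steps~2 and~3 are correct and match the paper's argument in different packaging. In the total case, your idempotent $e = \rho^k$ has $\ran e \neq \ell$ exactly when $\rho$ is not a permutation; this is the paper's Claim~2, which uses pairs of blocks rather than $2k$-tuples. Otherwise $e$ is the identity, just as the paper chooses $k$ with $\rho^{(k)}$ the identity. Both then finish with the same application of $\RT^1_\ell$ to the colors of the new blocks.
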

\begin{proof}
Suppose there is no such block sequence~$Y$, as otherwise we are done.

\begin{claim}
For every~$a \in \FU(X)$, $\dom \rho_a = \ell$.
\end{claim}
\begin{proof}
Indeed, otherwise, letting $x \not \in \dom \rho_a$ and $i$ be the least such that $a < a_i$, the set $Y = \{ a_j : j > i \}$ would be a block sequence such that $f$ avoids $x$ on $\FU(Y)$, contradicting our assumption. This proves Claim~1.
\end{proof}

It follows from Claim~1 that the map $a \mapsto \rho_a$ is $f \oplus X$-computable. 
By $\RT^1$ (which is equivalent to~$\BSig_2$ over~$\RCA_0$) applied on the $\ell^\ell$-coloring $n \mapsto \rho_{a_n}$, there is some map~$\rho : \ell \to \ell$ and an infinite set~$A = \{ n_0 < n_1 < \dots \} \subseteq \NN$ such that for every~$n \in A$, $\rho_{a_n} = \rho$.

\begin{claim}
$\rho$ is a permutation.
\end{claim}
\begin{proof}
Suppose not. Let $x < \ell$ be such that $x \not \in \operatorname{ran}(\rho)$.
For every~$i \in \NN$, let $\hat a_i = a_{n_{2i}} \cup a_{n_{2i+1}}$.
Let $Y = \langle \hat a_i : i \in \NN \rangle$. Then $f$ avoids color~$x$ on~$\FU(Y)$. 
Indeed, given $a \in \FU(G)$, there is some~$i \in \NN$ such that $a = \hat a_i \cup d$ for some~$d > \hat a_i$.
In particular, $a = a_{n_{2i}} \cup a_{n_{2i+1}} \cup d$. By choice of~$\rho$, $f(a) = f(a_{n_{2i}} \cup a_{n_{2i+1}} \cup d) = \rho(f(a_{n_{2i+1}} \cup d))$. Since $x \not \in \operatorname{ran}(\rho)$, $f(a) \neq x$. This contradicts our assumption, and proves Claim~2.
\end{proof}

Let $k \in \NN$ be such that $\rho^{(k)}$ is the identity permutation. Let $b_i = a_{n_{ki}} \cup \dots \cup a_{n_{k(i+1)-1}}$ and let $Y = \{ b_i : i \in \NN \}$. By an application of~$\RT^1$ to the $\ell$-coloring $i \mapsto f(b_i)$, we can furthermore assume that there is some color~$x < \ell$ such that $f(b_i) = x$ for every~$i \in \NN$. Note that by construction, $\rho_{b_i} = \rho^{(k)}$, hence is the identity function. It follows that for every~$i < j \in \NN$, $f(b_i \cup b_j) = f(b_j) = x$. Thus, $f$ is monochromatic for color~$x$ on~$\FU(Y)$. This completes our proof of \Cref{lem:fut-color-simple-to-hom}.
\end{proof}

\subsection{Simple colorings over~$\ACA_0$}

Given $X$, $f$ and $a \in \FU(X)$, we write $(\dagger_a)$ for the property 
\begin{quote}
\qt{for every $b_0, b_1 \in \FU(X)$ such that $\max a < \min b_0$ and $b_0 \sim_f b_1$, then $f(a \cup b_0) = f(a \cup b_1)$.}
\end{quote}

Here again, it is not clear that simplicity is closed under taking block sub-sequences.
The following lemma shows that it is the case.

\begin{lemma}\label[lemma]{lem:fut-simple-to-strong}
A coloring $f : \FU(X) \to \ell$ is simple iff for every~$a \in \FU(X)$, the property $(\dagger_a)$ holds.
\end{lemma}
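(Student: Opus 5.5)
The plan mirrors the proof of \Cref{lem:fut-color-simple-to-strong}: induct on the union-size of~$a$, now also tracking the extra constraint that $b_0 \sim b_1$.

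The backward direction is immediate. For every~$n$, $a_n \in \FU(X)$, and $(\dagger_{a_n})$ is exactly the defining clause of simplicity at index~$n$. (The hypothesis $\max a < \min b_1$ follows from $\max a < \min b_0$ and $\min b_0 = \min b_1$.)

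For the forward direction, assume $f$ is simple. I would prove $(\dagger_a)$ by induction on $\us(a)$.

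\textbf{Base case.} If $\us(a)=1$, then $a = a_n$ for some~$n$, and $(\dagger_a)$ is the simplicity clause itself.

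\textbf{Inductive step.} Write $a = a_i \cup a'$ with $a_i < a'$ and $\us(a') = \us(a)-1$. Fix $b_0, b_1 \in \FU(X)$ with $\max a < \min b_0$ and $b_0 \sim_f b_1$.
\begin{itemize}
\item Since $\max a' = \max a < \min b_0$, the induction hypothesis $(\dagger_{a'})$ gives $f(a' \cup b_0) = f(a' \cup b_1)$.
\item Set $b_0' = a' \cup b_0$ and $b_1' = a' \cup b_1$. Both lie in $\FU(X)$, since $a' < b_0, b_1$ and all three are finite unions of blocks of~$X$.
\item We have $\min b_0' = \min a' = \min b_1'$, and $\max b_0' = \max b_0 = \max b_1 = \max b_1'$, using $b_0 \sim b_1$.
\item Together with the previous equality of colors, this gives $b_0' \sim_f b_1'$.
\item Also $\max a_i < \min a' = \min b_0', \min b_1'$.
\end{itemize}
Simplicity applied at index~$i$ then yields
$$f(a \cup b_0) = f(a_i \cup b_0') = f(a_i \cup b_1') = f(a \cup b_1),$$
which establishes $(\dagger_a)$.

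The only point needing care, and the one I would flag as the potential obstacle, is that the $\sim$ relation survives prepending~$a'$. The $\min$-agreement comes for free from the common prefix~$a'$. The $\max$-agreement requires $b_0 \sim b_1$ and not merely equality of colors; this is exactly why the simple case uses $\sim_f$ instead of plain color equality.

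One should also note that the induction is on a $\Pi^0_1(f \oplus X)$ property. It therefore goes through in $\ACA_0$ (even in $\RCA_0$, by $\Pi^0_1$-induction), which is consistent with the section's base theory.
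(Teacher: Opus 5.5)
Your proof is correct and follows the paper's argument essentially verbatim: induction on $\us(a)$, splitting $a = a_i \cup a'$, applying the induction hypothesis to $a'$, and then applying simplicity at index $i$ to the pair $b_0' = a' \cup b_0 \sim_f b_1' = a' \cup b_1$. Your extra remarks are correct details that the paper leaves implicit. First, $\min b_0 = \min b_1$ makes the one-sided hypothesis $\max a < \min b_0$ in $(\dagger_a)$ match the two-sided hypothesis in the definition of simplicity. Second, the induction is on a $\Pi^0_1(f \oplus X)$ property.
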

\begin{proof}
The backward direction is immediate since for every~$n \in \NN$, $a_n \in \FU(X)$.
Suppose now $f$ is simple. Recall that given $a \in \FU(X)$, the \emph{union-size} of $a$, written $\us(a)$ is the cardinality of the set~$H$ such that $a = \bigcup_{n \in H} a_n$. We prove by induction on the union-size of~$a$ that $(\dagger_{a})$ holds. The case $\us(a) = 1$ holds by simplicity of~$f$. Let $a \in \FU(X)$ and let $b_0, b_1 \in \FU(X)$ be such that $\max a < \min b_0$ and $b_0 \sim_f b_1$. Say $a = a_i \cup a'$ with $a_i < a'$.
Then $\us(a') < \us(a)$, so by induction hypothesis, $(\dagger_{a'})$ holds, so $f(a' \cup b_0) = f(a' \cup b_1)$.
Let $b_0' = a' \cup b_0$ and $b_1' = a' \cup b_1$. Then $\max a_i < \min(b_0',b_1')$, $\min b_0' = \min b_1' = \min a'$, $\max b_0' = \max b_1'=  \max b_0 = \max b_1$, and $f(b_0') = f(a' \cup b_0) = f(a' \cup b_1) = f(b_1')$, so $b_0' \sim_f b_1'$. By simplicity of~$f$, $f(a \cup b_0) = f(a_i \cup b_0') = f(a_i \cup b_1') = f(a \cup b_1)$, so $(\dagger_a)$ holds. 
\end{proof}

\begin{proposition}[$\ACA_0$]\label[proposition]{prop:fut-simple-to-hom}
Let $X = \{ a_0, a_1, \dots \}$ be a computable block sequence and $f : \FU(X) \to \ell$ be a simple coloring for some~$\ell \geq 1$. There is a block sequence $Y \subseteq \FU(X)$ such that $f$ is monochromatic on $\FU(Y)$.
\end{proposition}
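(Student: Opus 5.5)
Since $\ACA_0$ is available, the plan is to first pass to a block subsequence on which the ``boundary data'' that simplicity refers to becomes uniform, and then to reduce to the color-simple case.

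\textbf{Step 1: factor $f$ through the relevant data.} By \Cref{lem:fut-simple-to-strong}, every $a \in \FU(X)$ satisfies $(\dagger_a)$. Hence for every $a < b$ with $a, b \in \FU(X)$, the value $f(a \cup b)$ depends only on $a$, on $\min b$, on $\max b$ and on $f(b)$. The difficulty is the dependence on $\min b$ and $\max b$; I will remove it by thinning.

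\textbf{Step 2: make the dependence on the endpoints disappear.} Define a coloring of $\FU(X)$ as follows. Send $b$ to the function
$$
\sigma_b : \{ (n, c) : n < \min b,\ c < \ell \} \to \ell \cup \{\bot\},
$$
where $\sigma_b(n, c) = f(a_n \cup b')$ for any $b' \sim_f$-equivalent to $b$ with $f(b') = c$, and $\sigma_b(n, c) = \bot$ if there is no such $b'$. This is arithmetic in $f \oplus X$, since the existence of $b'$ is $\Sigma^0_1$, so it exists in $\ACA_0$.

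Unfortunately the domain of $\sigma_b$ grows with $\min b$, so this is not a finite coloring. I would instead handle this by a stagewise construction, in the style of Ramsey's theorem for pairs, computable in the arithmetic data.

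Choose $c_0 < c_1 < \cdots$ in $\FU(X)$ so that, for each $i$, the partial map $\rho^i_{c_j}$ is the same for all $j > i$. Here
$$
\rho^i_{c_j} : x \mapsto f(c_i \cup b) \quad \text{for } b \text{ with } \min b = \min c_j,\ \max b = \max c_j \text{ and } f(b) = x.
$$
Such a sequence is obtained using $\RT^1$ with finitely many colors at each step, via an $\emptyset''$-style, i.e.\ arithmetic, choice of infinite reservoirs. This choice of reservoirs is the step where $\ACA_0$ is genuinely used.

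The goal is to arrange that on $Y = \{c_i\}$, the value $f(c_i \cup b)$ for $b \in \FU(Y)$ with $c_i < b$ depends only on $f(b)$. That is exactly color-simplicity on $Y$. To get it, I need to control the value $\rho_{c_i}$ determined by $\min b$ and $\max b$, where $\max b$ ranges over the maxima of later blocks. I would therefore apply the thinning to pairs $(\min, \max)$: use $\RT^2_k$ (available in $\ACA_0$) on the coloring $(j, j') \mapsto$ (the map $\rho^i$ with endpoints $\min c_j$ and $\max c_{j'}$), iterated over $i$ in a diagonal fashion. By Lemma \ref{lem:fut-simple-to-strong} again, the same uniformity then propagates to all of $\FU(Y)$.

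\textbf{Step 3: conclude.} Now $f \uh \FU(Y)$ is color-simple. Applying \Cref{lem:fut-color-simple-to-hom} repeatedly, we can pass to successive block subsequences, each avoiding one more color. Since color-simplicity passes to block subsequences by \Cref{lem:fut-color-simple-to-strong}, and since $\ell$ is standard, at most $\ell - 1$ iterations are needed. At that point either only one color remains, or the proposition's proof gives monochromaticity directly via the permutation argument.

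The main obstacle is Step 2: arranging a diagonal, infinitely iterated thinning so that the dependence on both endpoints becomes uniform, while keeping it formalizable in $\ACA_0$. I would first try to define the whole sequence $\langle c_i \rangle$ by a single arithmetic recursion, using $\RT^2$ with the reservoirs coded arithmetically.
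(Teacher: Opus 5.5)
Your overall architecture (thin $X$ until $f$ becomes color-simple, then invoke \Cref{lem:fut-color-simple-to-hom}) matches the paper's, and Step~1 is correct. Step~2, where all the work lies, has two genuine gaps. The first is that the ``diagonal'' iteration is not available as described. You pick $c_i$, solve an $\RT^2$-instance depending on $c_i$ inside the current reservoir, pick $c_{i+1}$ in the resulting set, and so on. This is an $\omega$-length recursion in which each step needs a fresh $\RT^2$-solution relative to the previous reservoir, roughly two more jumps each time. The natural bound on the whole sequence is therefore an $\omega$-jump, and you leave open how to do it in $\ACA_0$. The missing idea is to stop choosing the left set adaptively and make it a coordinate of a coloring fixed in advance. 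The paper colors triples $i<j<k$ of indices of the \emph{original} blocks by $g_x(i,j,k)=f(a_i\cup d)$ for any $d\in\FU(X)$ with $\min d=\min a_j$, $\max d=\max a_k$ and $f(d)=x$ (and $0$ if there is none). This value is independent of the choice of $d$ by $(\dagger_{a_i})$, and it is computable because there are finitely many candidates. A single application of $\RT^3$, which $\ACA_0$ proves, to $g_0\times\dots\times g_{\ell-1}$ then handles all $i$ at once. (The standard derivation of $\RT^3$ from $\RT^2$ avoids exactly this kind of diagonal iteration by passing through a pre-homogeneous set, which is what keeps it arithmetic.)

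The second gap is that, even granting the homogeneity you ask for, color-simplicity does not follow. A single block $b=c_j$ is governed by your $\RT^1$-map and a $b$ spanning several blocks by your pair map, and nothing forces the two to agree. For instance, let $X=\{\{n\}:n\in\NN\}$ and $f(b)=(|b|+\epsilon(b))\bmod 2$, where $\epsilon(b)=1$ if $\max b<2\min b$ and $\epsilon(b)=0$ otherwise; one checks directly that $f$ is simple. Take $c_j=\{n_j\}$ with $n_0\ge 2$ and $n_{j+1}\ge 2n_j$. Every single-block map is $0\mapsto 0$ and every pair map is $x\mapsto 1-x$, so all your homogeneity conditions hold. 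Yet for $i<j<j'$, the sets $b_0=\{n_j\}$ and $b_1=\{n_j,n_{j'}\}$ both have color $0$, while $f(c_i\cup b_0)=0\neq 1=f(c_i\cup b_1)$. The paper avoids this by taking the blocks of $Y$ to be $a_{n_{2i}}\cup a_{n_{2i+1}}$ with the $n$'s from the homogeneous set. Then every element of $\FU(Y)$ starts and ends in distinct homogeneous blocks and is governed by a single triple color. Color-simplicity at a block $a_n\cup a_m$ of $Y$ follows by applying the single-block statement twice: first with $a_m$ to $b_0,b_1$, then with $a_n$ to $a_m\cup b_0,a_m\cup b_1$. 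A minor point in Step~3: in $\ACA_0$ the number $\ell$ need not be standard, so an $(\ell-1)$-fold iteration is not automatic. Choosing $k$ with $\rho^{(k)}$ idempotent, instead of requiring $\rho$ to be a permutation, and pairing the resulting blocks once more gives a monochromatic sequence directly.
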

\begin{proof}
Fix $X$ and $f$.
Let $A = \{ \min a_n : n \in \NN \}$. For $x < \ell$ Let $g_x : [A]^3 \to \ell$ be defined by $g_x(\min a_i, \min a_j, \min a_k) = f(a_i \cup a_j \cup c \cup a_k)$ if there is some~$c \in \FU(a_j, \dots, a_k)$ such that $f(a_j \cup c \cup a_k) = x$, and $g_x(\min a_i, \min a_j, \min a_k) = 0$ otherwise. Note that $a_j, a_k \in \FU(a_j, \dots, a_k)$, so it might be that $a_j \cup c \cup a_k = a_j \cup a_k$.

By an application of~$\RT^3$ on the product coloring $g_0 \times \cdots g_{\ell-1}$, there is an infinite subset~$B = \{ n_0 < n_1 < \dots \} \subseteq \NN$ such that for each~$x < \ell$, $B$ is $g_x$-homogeneous for some color~$y_x < \ell$. For every~$i \in \NN$, let $\hat a_i = a_{n_{2i}} \cup a_{n_{2i+1}}$, and let $Y = \langle \hat a_i : i \in \NN \rangle$.

\begin{claim}
For every~$i \in B$ and $d \in \FU(Y)$ with $\max a_i < \min d$, then $f(a_i \cup d) = g_{f(d)}(i, j, k)$ for some~$j < k \in B$.
\end{claim}
\begin{proof}
Fix~$i$, $d$. Note that $d = a_j \cup c \cup a_k$ for some $j < k \in Y$ and $c \in \FU(a_j, \dots, a_k)$. Let $x = f(d)$. Then $g_x(i, j, k) = f(a_i \cup a_j \cup c' \cup a_k)$ for some $c' \in \FU(a_j, \dots, a_k)$ such that $f(a_j \cup c' \cup c_k) = x$. Note that $d = a_j \cup c \cup a_k \sim_f a_j \cup c' \cup a_k$,
so by simplicity of~$f$, $f(a_i \cup d) = f(a_i \cup a_j \cup c' \cup a_k) = g_x(i, j, k)$. This proves our Claim~1.
\end{proof}

\begin{claim}
For every~$i \in B$ and every $b_0, b_1 \in \FU(Y)$ with $\max a_i < \min b_0, b_1$ and $f(b_0) = f(b_1)$, then $f(a_i \cup b_0) = f(a_i \cup b_1)$.
\end{claim}
\begin{proof}
Fix~$i$, $b_0$ and $b_1$. Let $x = f(b_0) = f(b_1)$. By Claim 1, there are some~$j < k \in B$ and some~$u < v \in B$ such that $f(a_i \cup b_0) = g_x(i, j, k)$ and $f(a_i \cup b_1) = g_x(i, u, v)$. Since $B$ is $g_x$-homogeneous, then $g_x(i, u, v) = g_x(i, j, k)$, so $f(a_i \cup b_0) = f(a_i \cup b_1)$. This proves Claim 2.
\end{proof}

\begin{claim}
$f$ is color-simple on~$\FU(Y)$.
\end{claim}
\begin{proof}
Fix some $i \in \NN$ and some~$b_0, b_1 \in \FU(Y)$ with $\max \hat a_i < \min b_0, b_1$ and $f(b_0) = f(b_1)$.
Let $n, m \in B$ be such that $\hat a_i = a_n \cup a_m$. By Claim~2, $f(a_m \cup b_0) = f(a_m \cup b_1)$, so still by Claim~2, $f(a_n \cup a_m \cup b_0) = f(a_n \cup a_m \cup b_1)$. This proves Claim~3.
\end{proof}

By \Cref{lem:fut-color-simple-to-hom}, there is a block sequence $Z \subseteq \FU(Y)$ such that $f$ is monochromatic on~$\FU(Z)$. This completes our proof of \Cref{prop:fut-simple-to-hom}.
\end{proof}

\bibliographystyle{plain}
\bibliography{biblio}

\end{document}